\begin{document}

\theoremstyle{plain}

\newtheorem{thm}{Theorem}[section]
\newtheorem{lem}[thm]{Lemma}
\newtheorem{Problem B}[thm]{Problem B}

\newtheorem{pro}[thm]{Proposition}
\newtheorem{cor}[thm]{Corollary}
\newtheorem{que}[thm]{Question}
\newtheorem{rem}[thm]{Remark}
\newtheorem{defi}[thm]{Definition}

\newtheorem*{thmA}{Theorem A}
\newtheorem*{corB}{Corollary B}
\newtheorem*{thmC}{Theorem C}
\newtheorem*{thmD}{Theorem D}

\newtheorem*{thmAcl}{Main Theorem$^{*}$}
\newtheorem*{thmBcl}{Theorem B$^{*}$}

\newcommand{\Maxn}{\operatorname{Max_{\textbf{N}}}}
\newcommand{\Syl}{\operatorname{Syl}}
\newcommand{\dl}{\operatorname{dl}}
\newcommand{\Con}{\operatorname{Con}}
\newcommand{\cl}{\operatorname{cl}}
\newcommand{\Stab}{\operatorname{Stab}}
\newcommand{\Aut}{\operatorname{Aut}}
\newcommand{\Ker}{\operatorname{Ker}}
\newcommand{\fl}{\operatorname{fl}}
\newcommand{\Irr}{\operatorname{Irr}}
\newcommand{\SL}{\operatorname{SL}}
\newcommand{\FF}{\mathbb{F}}
\newcommand{\NN}{\mathbb{N}}
\newcommand{\N}{\mathbf{N}}
\newcommand{\bfC}{\mathbf{C}}
\newcommand{\bfO}{\mathbf{O}}
\newcommand{\bfF}{\mathbf{F}}

\renewcommand{\labelenumi}{\upshape (\roman{enumi})}

\newcommand{\PSL}{\operatorname{PSL}}
\newcommand{\PSU}{\operatorname{PSU}}

\providecommand{\V}{\mathrm{V}}
\providecommand{\E}{\mathrm{E}}
\providecommand{\ir}{\mathrm{Irr_{rv}}}
\providecommand{\Irrr}{\mathrm{Irr_{rv}}}
\providecommand{\re}{\mathrm{Re}}

\numberwithin{equation}{section}
\def\irrp#1{{\rm Irr}_{p'}(#1)}

\def\ibrrp#1{{\rm IBr}_{\Bbb R, p'}(#1)}
\def\Z{{\mathbb Z}}
\def\C{{\mathbb C}}
\def\Q{{\mathbb Q}}
\def\irr#1{{\rm Irr}(#1)}
\def\ibr#1{{\rm IBr}(#1)}
\def\irrp#1{{\rm Irr}_{p^\prime}(#1)}
\def\irrq#1{{\rm Irr}_{q^\prime}(#1)}
\def \c#1{{\cal #1}}
\def\cent#1#2{{\bf C}_{#1}(#2)}
\def\syl#1#2{{\rm Syl}_#1(#2)}
\def\nor{\triangleleft\,}
\def\oh#1#2{{\bf O}_{#1}(#2)}
\def\Oh#1#2{{\bf O}^{#1}(#2)}
\def\zent#1{{\bf Z}(#1)}
\def\det#1{{\rm det}(#1)}
\def\ker#1{{\rm ker}(#1)}
\def\norm#1#2{{\bf N}_{#1}(#2)}
\def\alt#1{{\rm Alt}(#1)}
\def\iitem#1{\goodbreak\par\noindent{\bf #1}}
   \def \mod#1{\, {\rm mod} \, #1 \, }
\def\sbs{\subseteq}

\def\gc{{\bf GC}}
\def\ngc{{non-{\bf GC}}}
\def\ngcs{{non-{\bf GC}$^*$}}
\newcommand{\notd}{{\!\not{|}}}
\newcommand{\Out}{{\mathrm {Out}}}
\newcommand{\Mult}{{\mathrm {Mult}}}
\newcommand{\Inn}{{\mathrm {Inn}}}
\newcommand{\IBR}{{\mathrm {IBr}}}
\newcommand{\IBRL}{{\mathrm {IBr}}_{\ell}}
\newcommand{\IBRP}{{\mathrm {IBr}}_{p}}
\newcommand{\ord}{{\mathrm {ord}}}
\def\id{\mathop{\mathrm{ id}}\nolimits}
\renewcommand{\Im}{{\mathrm {Im}}}
\newcommand{\Ind}{{\mathrm {Ind}}}
\newcommand{\diag}{{\mathrm {diag}}}
\newcommand{\soc}{{\mathrm {soc}}}
\newcommand{\End}{{\mathrm {End}}}
\newcommand{\sol}{{\mathrm {sol}}}
\newcommand{\Hom}{{\mathrm {Hom}}}
\newcommand{\Mor}{{\mathrm {Mor}}}
\newcommand{\Mat}{{\mathrm {Mat}}}
\def\rank{\mathop{\mathrm{ rank}}\nolimits}
\newcommand{\Tr}{{\mathrm {Tr}}}
\newcommand{\tr}{{\mathrm {tr}}}
\newcommand{\Gal}{{\it Gal}}
\newcommand{\Spec}{{\mathrm {Spec}}}
\newcommand{\ad}{{\mathrm {ad}}}
\newcommand{\Sym}{{\mathrm {Sym}}}
\newcommand{\Char}{{\mathrm {char}}}
\newcommand{\pr}{{\mathrm {pr}}}
\newcommand{\rad}{{\mathrm {rad}}}
\newcommand{\abel}{{\mathrm {abel}}}
\newcommand{\codim}{{\mathrm {codim}}}
\newcommand{\ind}{{\mathrm {ind}}}
\newcommand{\Res}{{\mathrm {Res}}}
\newcommand{\Lie}{{\mathrm {Lie}}}
\newcommand{\Ext}{{\mathrm {Ext}}}
\newcommand{\Alt}{{\mathrm {Alt}}}
\newcommand{\AAA}{{\sf A}}
\newcommand{\SSS}{{\sf S}}
\newcommand{\CC}{{\mathbb C}}
\newcommand{\CB}{{\mathbf C}}
\newcommand{\RR}{{\mathbb R}}
\newcommand{\QQ}{{\mathbb Q}}
\newcommand{\ZZ}{{\mathbb Z}}
\newcommand{\bfN}{{\mathbf N}}
\newcommand{\bfZ}{{\mathbf Z}}
\newcommand{\EE}{{\mathbb E}}
\newcommand{\PP}{{\mathbb P}}
\newcommand{\cG}{{\mathcal G}}
\newcommand{\cH}{{\mathcal H}}
\newcommand{\cQ}{{\mathcal Q}}
\newcommand{\GA}{{\mathfrak G}}
\newcommand{\cT}{{\mathcal T}}
\newcommand{\cS}{{\mathcal S}}
\newcommand{\cR}{{\mathcal R}}
\newcommand{\cB}{{\mathcal B}}
\newcommand{\GCD}{\GC^{*}}
\newcommand{\TCD}{\TC^{*}}
\newcommand{\FD}{F^{*}}
\newcommand{\GD}{G^{*}}
\newcommand{\HD}{H^{*}}
\newcommand{\GCF}{\GC^{F}}
\newcommand{\TCF}{\TC^{F}}
\newcommand{\PCF}{\PC^{F}}
\newcommand{\GCDF}{(\GC^{*})^{F^{*}}}
\newcommand{\RGTT}{R^{\GC}_{\TC}(\theta)}
\newcommand{\RGTA}{R^{\GC}_{\TC}(1)}
\newcommand{\Om}{\Omega}
\newcommand{\eps}{\epsilon}
\newcommand{\al}{\alpha}
\newcommand{\chis}{\chi_{s}}
\newcommand{\sigmad}{\sigma^{*}}
\newcommand{\PA}{\boldsymbol{\alpha}}
\newcommand{\gam}{\gamma}
\newcommand{\lam}{\lambda}
\newcommand{\la}{\langle}
\newcommand{\ra}{\rangle}
\newcommand{\hs}{\hat{s}}
\newcommand{\htt}{\hat{t}}
\newcommand{\tn}{\hspace{0.5mm}^{t}\hspace*{-0.2mm}}
\newcommand{\ta}{\hspace{0.5mm}^{2}\hspace*{-0.2mm}}
\newcommand{\tb}{\hspace{0.5mm}^{3}\hspace*{-0.2mm}}
\def\skipa{\vspace{-1.5mm} & \vspace{-1.5mm} & \vspace{-1.5mm}\\}
\newcommand{\tw}[1]{{}^#1\!}
\renewcommand{\mod}{\bmod \,}

\marginparsep-0.5cm

\renewcommand{\thefootnote}{\fnsymbol{footnote}}
\footnotesep6.5pt

\title{Finite Groups with Odd Sylow Normalizers}

\author{Robert M. Guralnick}
\address{Department of Mathematics, University of Southern California, 3620 S. Vermont Ave., Los Angeles, CA 90089, USA}
\email{guralnic@usc.edu}
\author{Gabriel Navarro}
\address{Departament d'\`Algebra, Universitat de Val\`encia, 46100 Burjassot,
Val\`encia, Spain}
\email{gabriel.navarro@uv.es}
\author{Pham Huu Tiep}
\address{Department of Mathematics, University of Arizona, Tucson, AZ 85721, USA}
\email{tiep@math.arizona.edu}

\thanks{The first author gratefully acknowledges the support of the NSF grant DMS-1302886.}
 
\thanks{The research of the second author is supported by the Prometeo/Generalitat 
Valenciana,
Proyectos MTM2013-40464-P.  He would like to express his gratitude to the Mathematics Department
of the University of Southern California  where part of the present work was completed
 for its warm hospitality.} 
\thanks{The third author was partially supported by the NSF grant DMS-1201374 and the Simons 
  Foundation Fellowship 305247. Part of this work was done while the third author visited the Institute for 
  Advanced Study (Princeton, NJ). It is a pleasure to thank Peter Sarnak and the Institute for 
  generous hospitality and stimulating environment.}
\thanks{The authors are grateful to Gunter Malle for helpful comments on the paper.}

\keywords{}

\subjclass[2010]{Primary 20D06; Secondary 20D20}

\begin{abstract}
We determine the non-abelian composition factors of the finite groups
with Sylow normalizers of odd order. As a consequence, among others,
we prove the McKay conjecture and the Alperin weight conjecture for these 
groups at these primes.
\end{abstract}

\maketitle

\section{Introduction}
 
 Suppose that $G$ is a finite group, $p$ is an odd prime number, and $P$ is a Sylow
 $p$-subgroup of $G$.
 In \cite{GMN}, the first two authors and G. Malle 
determined the non-abelian composition factors
 of $G$ if $P=\norm GP$. This led to proving a strong form of the McKay conjecture in \cite{N} and \cite{NTV}
 for these groups at the prime $p$. Now, instead of assuming that $\norm GP/P$ is trivial, we assume that it has odd order.
 Although in this case
 the structure of $G$ can be fairly complicated,  we nevertheless
 are able to have control on the non-abelian composition factors of $G$. 
 
 \begin{thmA}
 Let $G$ be a finite group, let $p$ be a prime, and $P \in \syl pG$.
 Assume that $|\norm GP|$ is odd. If $S$ is a non-abelian composition
 factor of $G$, then  $|S|$ is divisible by $p$, and
 either $S$ has cyclic Sylow $p$-subgroups or $S=PSL_2(q)$,
 for some power $q=p^f \equiv 3 (\mod 4)$. 
 \end{thmA}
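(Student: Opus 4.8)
The plan is to reduce the statement to a question about simple groups via a minimal-counterexample argument, and then invoke the classification of finite simple groups to rule out all cases other than the two allowed families. First I would note that the hypothesis ``$|\norm GP|$ is odd'' is inherited by subgroups and quotients in the relevant way: if $N \nor G$, then $\norm{G/N}{PN/N}$ is a quotient of $\norm GP/(N \cap \norm GP)$ up to the usual Sylow considerations, so it again has odd order; and for a subgroup $H \leq G$ containing a Sylow $p$-subgroup of itself inside $P$, one has $\norm H{P \cap H} \leq \norm GP$ is odd as well. Hence any non-abelian composition factor $S$ of $G$ arises as a composition factor of a group satisfying the same hypothesis, and it suffices to prove the theorem when $G = S$ is itself non-abelian simple. (Here one must be slightly careful: the Sylow normalizer in $S$ need not literally be odd, but it must contain no nontrivial odd-order-avoiding obstruction — the correct statement is that $\norm SP$ has odd order, which follows because $S$ appears as $M/L$ with $L \nor M \leq G$ and the normalizer hypothesis passes down through this chain.) In particular $p \mid |S|$ is immediate once $P \neq 1$ is forced; if $p \nmid |S|$ then $S$ itself would have to be one of the composition factors, but a $p'$-group cannot be a counterexample, so we may assume $p \mid |S|$ throughout.

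Next, reduce to the case that $S$ has non-cyclic Sylow $p$-subgroups, since groups with cyclic Sylow $p$-subgroups are explicitly permitted in the conclusion. So assume $S$ is simple, $p \mid |S|$, the Sylow $p$-subgroup $P$ of $S$ is non-cyclic, and $\norm SP$ has odd order; we must show $p$ is odd and $S \cong \PSL_2(q)$ with $q = p^f \equiv 3 \pmod 4$. The case $p = 2$ is eliminated quickly: a Sylow $2$-subgroup $P$ of any non-abelian simple group is non-trivial, and $\norm SP \supseteq P$ would then have even order, contradiction. So $p$ is odd. Now run through the classification. For $S$ alternating, $\A_n$ with $n \geq 5$: for $n \geq 2p$ the Sylow normalizer contains a transposition-type involution (an element of $\A_n$ swapping two fixed points), and for $p \leq n < 2p$ one has $P$ cyclic, so these contribute nothing new. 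For $S$ of Lie type in defining characteristic $p$: here $P$ is a Sylow $p$-subgroup of $\mathbf G^F$ and $\norm SP$ is essentially $B/P$ times an outer piece, i.e. governed by a maximal torus $T$ normalizing $P$; $|T|$ is even except in very small rank, and a careful analysis (using that $T$ contains the image of $\mathrm{diag}(\lambda, \lambda^{-1})$-type elements of even order unless the root system and $q$ conspire) pins the surviving cases down to $\PSL_2(q)$, where $\norm SP$ has order $(q-1)/\gcd(2,q-1)$, which is odd exactly when $q \equiv 3 \pmod 4$. For $S$ of Lie type in cross characteristic, and for the sporadic groups, the Sylow normalizer almost always contains an involution; one handles these by the standard tools — Sylow theory in finite groups of Lie type (Gow, and the structure of $\norm{G}{P}$ for $P$ a non-central-characteristic Sylow subgroup), together with a finite check for sporadics.

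The main obstacle I expect is the defining-characteristic case for Lie type groups of small rank: there one cannot rely on a generic even-order torus element and must instead analyze $\norm SP = N_{\mathbf G^F}(P)/Z$ explicitly, tracking the Ennola-type dependence of $|N/P|$ on $q \bmod 4$ (and on whether the group is untwisted or twisted, adjoint or simply connected, and what the center and diagonal automorphisms contribute). This is precisely where $\PSL_2(q)$ with $q \equiv 3 \pmod 4$ sneaks through and where groups like $\PSL_2(q)$ with $q \equiv 1 \pmod 4$, $\PSL_3(q)$, $\PSU_3(q)$, ${}^2B_2(q)$, etc. must each be checked to fail — i.e. to have an involution in the Sylow normalizer (or cyclic Sylow $p$-subgroups, as with ${}^2B_2$). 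A secondary, more bookkeeping-heavy obstacle is making the reduction to the simple case fully rigorous when $p \mid |\Out(S)|$ or when outer automorphisms enlarge the Sylow normalizer, but since we only need the conclusion for the simple group $S$ itself and the hypothesis is on $G \geq S$ (so the normalizer inside $S$ is a fortiori constrained), this should go through cleanly using the subquotient-stability of the hypothesis noted above.
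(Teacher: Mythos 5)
There is a genuine gap in your reduction to simple groups, and it is the crux of the matter. You assert that for a subgroup $H \leq G$ with $P \cap H \in \Syl_p(H)$ one has $\norm H{P\cap H} \leq \norm GP$; this is false (an element of $H$ normalizing $P \cap H$ need not normalize $P$), and consequently the hypothesis that $|\norm GP|$ is odd does \emph{not} descend to give that $|\norm S{P \cap S}|$ is odd for a composition factor $S$. What actually descends, and what the paper proves and uses, is weaker: if $N$ is a minimal non-abelian normal subgroup and $Q = P \cap N$, then $\cent{\norm NQ/Q}{P}$ has odd order, i.e.\ $\norm NQ/Q$ has no involution \emph{centralized by} $P$, though it may well contain involutions. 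Accordingly, the correct target statement about simple groups is the almost simple one (Theorem 2.1 of the paper): $G = SP$ with socle $S$, $G/S$ a $p$-group, and $|\norm GP|$ odd. Your plan classifies simple $S$ with $|\norm S{P\cap S}|$ odd, which is a priori a smaller class and does not suffice: a simple group could have an even Sylow normalizer all of whose involutions are moved by the outer $p$-automorphisms, and such an $S$ would escape your analysis while still being a legitimate composition factor of a group satisfying the hypothesis. Passing from the minimal normal subgroup $N = S^{x_1}\times \cdots \times S^{x_t}$ down to the almost simple group $S\,\norm PS$ also requires the diagonal argument (spreading an involution $xR$ of $\cent{\norm SR/R}{\norm PS}$ out as $\prod_{u} x^u$ to produce a $P$-centralized involution in $\norm NQ/Q$), which your sketch omits entirely.

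A second gap: your dismissal of the case $p \nmid |S|$ is circular (``a $p'$-group cannot be a counterexample''). The theorem asserts that every non-abelian composition factor has order divisible by $p$, so one must actively exclude non-abelian composition factors of $p'$-order. The paper does this by observing that a minimal normal subgroup $N$ of $p'$-order satisfies $\cent NP = \norm NP$ of odd order and then invoking a CFSG-based theorem on coprime actions with odd-order fixed points (Theorem 3.4 of \cite{IMN1}) to conclude that $N$ is solvable; some input of this kind is unavoidable. Finally, for the classification itself your sketch does not identify the paper's key mechanism --- a $p$-central element of order $p$ in $S$ must be non-real (Burnside fusion control plus the oddness of $\norm GP$), which is then played off against the reality theorems of Tiep and Zalesskii and against explicit involutions constructed in the relevant Sylow normalizers --- but that is a matter of method; the two issues above are the logical errors.
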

 
There are too many almost simple groups with odd Sylow normalizers
to be listed. However, once Theorem A is proved,
we can apply the results in
\cite{IMN2}, \cite{KS}, \cite{S1} and \cite{S2} to establish the main counting conjectures for
groups with odd Sylow normalizers.
(Recall that $\irrp G$ is the set of the irreducible complex characters of $G$
with degree not divisible by $p$.)

\begin{corB}
Let $G$ be a finite group, let $p$ be a prime and $P \in \syl pG$.
If $|\norm GP|$ is odd, then 
$$|\irrp G|=|\irrp{\norm GP}| \, .$$
Furthermore, the Alperin-McKay conjecture and the blockwise Alperin weight conjecture hold
true for $G$, for the prime $p$.
\end{corB}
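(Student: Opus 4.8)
First I would reduce to the case $p$ odd: if $p=2$ then $P\leq\norm GP$ together with $|\norm GP|$ odd forces $|P|=1$, so $p\notd|G|$, $\norm GP=G$, and all three assertions are trivial. The plan for odd $p$ is to derive the three statements from the existing reduction theorems — \cite{IMN2} for the McKay conjecture, \cite{S1} for the Alperin--McKay conjecture, and \cite{S2} for the blockwise Alperin weight conjecture. Each of these reduces the assertion for $G$ at $p$ to the verification of the corresponding \emph{inductive} condition at $p$ for every non-abelian simple group $S$ with $p\mid|S|$ that is involved in $G$ (i.e.\ isomorphic to $H/K$ for some $K\nor H\leq G$); recall also that the first equality $|\irrp G|=|\irrp{\norm GP}|$ is exactly the McKay conjecture for $G$ at $p$, and that it in any case follows from the Alperin--McKay conjecture for $G$ at $p$ by summing over the blocks of maximal defect (using Brauer's first main theorem). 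So it suffices to establish the relevant inductive conditions for the simple groups in question.

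\textbf{Step 1 (identifying the simple groups).} Using Theorem~A, I would show that every non-abelian simple group $S$ with $p\mid|S|$ involved in $G$ either has cyclic Sylow $p$-subgroups or is isomorphic to $\PSL_2(q')$ for some $q'=p^{f'}\equiv 3\ (\mod 4)$. Indeed, a non-abelian simple section of $G$ is a section of some composition factor of $G$ (an elementary fact), and sections of abelian groups are abelian, so $S$ is a section of a non-abelian composition factor $T$ of $G$. By Theorem~A, either $T$ has cyclic Sylow $p$-subgroups or $T\cong\PSL_2(q)$ with $q=p^f\equiv 3\ (\mod 4)$. In the first case every subgroup of $T$ has cyclic Sylow $p$-subgroups and, since a homomorphic image of a Sylow $p$-subgroup is a Sylow $p$-subgroup of the image, so does every section of $T$; in particular $S$ does. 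In the second case $p\equiv 3\ (\mod 4)$ and $f$ is odd, and by Dickson's classification of the subgroups of $\PSL_2(q)$ every non-abelian simple section of $T$ is isomorphic to $\alt 5$ or to a subfield subgroup $\PSL_2(p^{f'})$ with $f'\mid f$. Now $\alt 5\cong\PSL_2(5)$ has cyclic Sylow $r$-subgroups for every odd prime $r$, and since every divisor $f'$ of the odd number $f$ is odd we get $p^{f'}\equiv 3\ (\mod 4)$ (with $\PSL_2(p)$ itself falling under the cyclic case), so the claim holds.

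\textbf{Step 2 (inductive conditions and conclusion).} It remains to quote the verifications of the inductive conditions for the groups isolated in Step~1. For a non-abelian simple group with cyclic Sylow $p$-subgroups and $p$ odd, all $p$-blocks have cyclic defect groups, so the inductive McKay, Alperin--McKay, and blockwise Alperin weight conditions at $p$ hold by \cite{KS}. For $\PSL_2(q')$ with $q'=p^{f'}\equiv 3\ (\mod 4)$ (where the Sylow $p$-subgroup need not be cyclic), one invokes the known verification of these inductive conditions for $\PSL_2$ in its defining characteristic. Feeding these inputs into \cite{IMN2}, \cite{S1} and \cite{S2} then yields the McKay equality, the Alperin--McKay conjecture, and the blockwise Alperin weight conjecture for $G$ at $p$, as asserted. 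The real content is carried by Theorem~A; within the present argument the delicate points are the descent in Step~1 from composition factors (which Theorem~A controls) to arbitrary simple sections (which the reduction theorems demand), and — the main potential obstacle — ensuring that the inductive conditions for $\PSL_2(q')$ with $q'\equiv 3\ (\mod 4)$, a non-cyclic-defect situation in the defining characteristic, are already available in the precise form required by \cite{S1} and \cite{S2}.
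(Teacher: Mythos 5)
Your proposal is correct and follows essentially the same route as the paper: reduce via Theorem~A and Dickson's classification of subgroups of $\PSL_2(q)$ to showing that every non-abelian simple group involved in $G$ with order divisible by $p$ has cyclic Sylow $p$-subgroups or is a $\PSL_2(q')$ with $q'\equiv 3 \ (\mod 4)$, then invoke \cite{KS} for the cyclic case and \cite{S1}, \cite{S2} for $\PSL_2(q')$, and feed these into the reduction theorems. The only detail the paper adds that you elide is that the inductive conditions concern the universal covering group, so one must note that $p$ does not divide the Schur multiplier of a simple group with cyclic Sylow $p$-subgroups (via \cite[Corollary (11.21)]{Is}) before \cite{KS} applies.
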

 
In view of the recent proof by G. Malle and B. Sp\"ath \cite{MS} of the McKay conjecture for the prime $2$, 
we can now state that the McKay conjecture holds for the prime $p$ and for all finite groups $G$ whenever 
$|\bfN_G(P)/P|$ is odd for $P \in \Syl_p(G)$.

\medskip
Contrary to the case where $\norm GP=P$ (see \cite{N} and \cite{NTV}), there does not seem to
exist a canonical choice-free
bijection $\irrp G \rightarrow \irrp{\norm GP}$ if $|\norm GP|$ is odd. (If $G$ is solvable,
A. Turull did find a canonical bijection in   \cite{T}.) At least, we know for a fact that
there cannot exist
a bijection that commutes with complex conjugation.
In this case,  the trivial character would be the only real-valued irreducible character
of $G$ of $p'$-degree, and this is plainly false:
it was proved in \cite{IMN1} that for $p>3$ every non-solvable group
has a non-trivial real-valued irreducible character of $p'$-degree. 
On the other hand, we prove the following. 
\begin{thmC}
Let $G$ be a finite group and let $p$ be a prime, let  $P \in \syl pG$, and assume that
$|\norm GP|$ is odd. If $\chi \in \irrp G$ is real-valued, then $N \leq \ker\chi$ for every
solvable $N \nor G$.
\end{thmC}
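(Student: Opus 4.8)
The plan is to argue by induction on $|G|$, peeling off one minimal normal solvable subgroup at a time. First dispose of a harmless case: if $p=2$ then $P=1$ (as $|P|$ is odd), so $|G|$ is odd and $\chi=1_G$ by Burnside, and we are done; so assume $p$ is odd. We may also assume $G$ has a nontrivial solvable normal subgroup (otherwise there is nothing to prove), and we fix a minimal normal subgroup $N\nor G$ contained in it; then $N$ is an elementary abelian $q$-group for some prime $q$.

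Step 1 (the heart of the argument): show $N\le\ker\chi$. Assume not, and let $\lambda\in\irr N$ be a nontrivial (hence linear) constituent of $\chi_N$. Since $|G:I_G(\lambda)|$ divides $\chi(1)$ and $p\nmid\chi(1)$, the index $|G:I_G(\lambda)|$ is prime to $p$; replacing $\lambda$ by a $G$-conjugate (still a constituent of $\chi_N$, by Clifford's theorem) we may assume $P\le I_G(\lambda)$, i.e. $\lambda$ is $P$-invariant. If $q=2$: then $\cent NP\le\norm GP$ is both a $2$-group and of odd order, so $\cent NP=1$; since $P$ acts coprimely on $N$, Brauer's permutation lemma shows $P$ fixes only $1_N$ in $\irr N$, contradicting $\lambda\ne 1_N$. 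If $q$ is odd: here we use that $\chi$ is real-valued, so $\overline\lambda=\lambda^{-1}$ is a constituent of $\overline{\chi_N}=\chi_N$, hence $\lambda^{-1}=\lambda^g$ for some $g\in G$. As $I_G(\lambda^{-1})=I_G(\lambda)=:T$, this forces $T^g=T$, so $g\in\norm GT$ and $P,P^g\in\syl pT$; choosing $s\in T$ with $h:=gs\in\norm GP$ and noting that $s$ fixes $\lambda$, we get $\lambda^h=(\lambda^{-1})^s=\lambda^{-1}$. Now $h\in\norm GP$ has odd order, so $\langle h\rangle=\langle h^2\rangle$, and $\lambda^{h^2}=(\lambda^{-1})^h=\lambda$, whence $\lambda^h=\lambda$; thus $\lambda=\lambda^{-1}$, so $\lambda^2=1_N$, and since $q$ is odd this gives $\lambda=1_N$ — a contradiction.

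Step 2 (the induction). With $N\le\ker\chi$ established, $\chi$ may be regarded as a real-valued character in $\irrp{G/N}$. Here $PN/N\in\syl p{G/N}$, and the Frattini argument gives $\norm G{PN}=N\,\norm GP$ (using $\norm GP\le\norm G{PN}$, since $N\nor G$), so that $\norm{G/N}{PN/N}=\norm G{PN}/N$ has odd order. By the inductive hypothesis, every solvable normal subgroup of $G/N$ lies in $\ker\chi$ (as a character of $G/N$). Finally, for an arbitrary solvable $M\nor G$, the image $MN/N$ is solvable and normal in $G/N$, hence $MN/N\le\ker\chi$; pulling back, $MN\le\ker\chi$ (in $G$), so $M\le\ker\chi$, as desired.

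The decisive point is the odd-$q$ case of Step 1: reality of $\chi$ manufactures an element $h$ of the odd-order group $\norm GP$ that inverts the $P$-invariant character $\lambda$, and oddness of the order of $h$ then forces $h$ to fix $\lambda$, collapsing $\lambda$ to $1_N$. The main subtlety is that this trick is vacuous when $q=2$ (inversion of $\lambda$ is automatic there); in that case one must instead exploit that an odd Sylow normalizer forces $\cent NP=1$, which by coprime action leaves no nontrivial $P$-invariant character of $N$. Notably, this argument is self-contained and uses no input from Theorem~A.
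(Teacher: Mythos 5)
Your proof is correct. Its engine is the same as the paper's: Clifford theory gives a $P$-invariant constituent of $\chi_N$, any two such constituents are $\norm GP$-conjugate by a Frattini argument, and reality of $\chi$ plus oddness of $|\norm GP|$ then forces that constituent to be real (your ``$h^2$ fixes $\lambda$, hence $h$ fixes $\lambda$'' step is exactly the paper's ``$g^2$ fixes $\theta$, hence $g$ fixes $\theta$''). The difference lies in the endgame. The paper treats an arbitrary solvable $N$ in one shot: the real $P$-invariant constituent $\theta$ is extended to a real character $\eta$ of the solvable group $NP$ via Lemma 2.1 of \cite{NS}, and Theorem A of \cite{IMN1} (a solvable group with odd Sylow normalizer has no nontrivial real irreducible character of $p'$-degree) forces $\eta$, hence $\theta$, to be principal. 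You instead induct on $|G|$ down to a minimal normal elementary abelian $q$-subgroup, where the constituent is linear and ``real implies trivial'' is immediate for odd $q$; for $q=2$, where reality gives nothing, you substitute the coprime-action argument $\cent NP=1$. Your route buys self-containedness --- no appeal to \cite{NS} or \cite{IMN1} --- at the cost of the inductive scaffolding (which requires verifying that the odd-Sylow-normalizer hypothesis passes to $G/N$, which you do correctly) and the separate $q=2$ case; the paper's route is shorter but outsources precisely those two ingredients to the literature. One cosmetic remark: for $q=2$ the invocation of Brauer's permutation lemma is heavier than needed --- since $N$ is abelian and $p\nmid|N|$, the $P$-modules $N$ and $\irr N$ are semisimple duals over $\FF_q$, so $\cent NP=1$ directly yields that $1_N$ is the only $P$-fixed character.
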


Finally, we remark
that it is a basic problem in character theory to investigate how the character table of a finite group reflects
(and is reflected by) its group theoretical properties. We wished to be able to detect from the character table
of $G$ whether or not $|\norm GP|$ is odd, but this seems to be an elusive problem (at least,
if $P$ is not abelian).
When finding properties which are detectable in the character table 
(as Theorem C), it is easy to prove that if $|\norm GP|$ has odd order, then
the only real class of $\bar G$ with $p'$-size is the identity, for every factor group $\bar G$ of $G$.
This condition characterizes that $|\norm GP|$ is odd in many groups, such as solvable groups, 
or groups with an abelian Sylow $p$-subgroups. 
(We write down proofs of these elementary results in Section 4 below.)
A general if and only if condition, however, remains to be discovered.

 \section{Proofs of Theorem A and Corollary B}

Our results rely on the following result which will be proved in the next section:

 \begin{thm}\label{main:simple}
  Suppose that $G$ is an almost simple group with socle $S$.
  Let $P \in \syl pG$ such that $G=SP$, $p$ divides $|S|$, and $|\norm GP|$ is odd.
  Then either $S$ has cyclic Sylow $p$-subgroups or $S=PSL_2(q)$ for some power
  $q = p^f \equiv 3 (\mod 4)$.
  \end{thm}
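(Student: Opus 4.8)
The plan is to reduce Theorem~\ref{main:simple} to a case-by-case analysis over the classification of finite simple groups, using the hypothesis that $\norm GP$ has odd order in two ways: first, $\norm SP \le \norm GP$ also has odd order, which already severely restricts $S$; and second, $G/S$ embeds in $\Out(S)$ and acts on the odd-order group $\norm SP$ (up to the subtlety that $G = SP$ forces $G/S$ to be a $p$-group), so $p$ itself must be odd. I would first dispose of the generic obstruction: if $P$ is cyclic there is nothing to prove, so assume $P$ is non-cyclic, and the goal becomes to show $S \cong \PSL_2(q)$ with $q = p^f \equiv 3 \pmod 4$.

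For the \textbf{alternating groups}, $\norm{\SSS_n}P$ contains a transposition whenever $n \ge 2p$ or even whenever the Sylow subgroup is non-cyclic (the normalizer of a Sylow $p$-subgroup of $\SSS_p$ already has even order, being $C_p \rtimes C_{p-1}$ inside $\SSS_p$, and in $\AAA_p$ it has order $p(p-1)/2$ which is even for $p \equiv 3 \pmod 4$ but the relevant case $p \equiv 1 \pmod 4$ gives odd order with \emph{cyclic} $P$); so non-cyclic Sylow forces $n \ge 2p$, and then an explicit transposition or double transposition in the normalizer gives even order. For the \textbf{sporadic groups} one checks the (short) list directly from known subgroup data. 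The main work is the \textbf{groups of Lie type}: here I would split into the defining-characteristic case $p = \Char(S)$ and the cross-characteristic case.

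In \textbf{defining characteristic}, $P$ is a Sylow $p$-subgroup of the group of Lie type, so $\norm GP$ contains (a conjugate of) a Borel's torus part, i.e.\ essentially $T^F$ up to graph/field automorphisms; $|T^F|$ is a product of cyclotomic-polynomial values in $q$, and for this to be odd one needs $q$ even or extremely restrictive rank conditions. Concretely, if $q$ is odd then $T^F$ contains an involution unless the group is very small, which pushes one down to $\SL_2(q)$-type situations and its quotient $\PSL_2(q)$; one then checks that $\norm GP$ for $G$ almost simple over $\PSL_2(q)$ has odd order exactly when $q \equiv 3 \pmod 4$ (and $G$ has no diagonal/field automorphisms of even order in play, consistent with $G = SP$). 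If $q$ is even then $p = 2$, but the hypothesis $G = SP$ and $|\norm GP|$ odd forces $|G/S|$ to be an odd power of $2$, a contradiction unless $G = S$, and then $\norm SP = B$ has even order for all Lie-type groups over $\FF_{2^a}$ of rank $\ge 1$ except we must check rank-$1$ cases — and those have cyclic or elementary-abelian non-cyclic Sylow $2$-subgroups whose Borel normalizer $B = \norm SP$ has order $q(q-1)$ or similar, which is even, eliminating them. In \textbf{cross characteristic}, $\norm SP$ typically contains $\norm{}{}$ of a Sylow $\Phi_d$-torus, and its relative Weyl group $N/C$ together with the centralizer torus almost always contributes a factor of $2$; the few exceptions are again governed by the structure of $\PSL_2(q)$.

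The \textbf{main obstacle} I anticipate is the bookkeeping in cross characteristic for the low-rank classical and exceptional groups, where the order of $\norm SP$ is $|\bfC_S(\text{Sylow } d\text{-torus})| \cdot |W_S(d)|$ and one must verify $2$-divisibility of this product across all relevant $d$ (the order $e = e_p(q)$ of $q$ mod $p$) and all twisted types — the relative Weyl groups $W_S(d)$ are usually complex reflection groups of even order, but pinning down every sporadic small case where everything miraculously becomes odd, and confirming that in each such case $P$ is in fact cyclic, is the delicate part. A secondary obstacle is making sure the reduction "$G = SP$ and $\norm GP$ odd $\Rightarrow$ $p$ odd and no even-order outer automorphisms intrude" is airtight, since graph automorphisms of order $2$ (e.g.\ for $\PSL_n$, $\mathrm{P}\Omega^+_{2n}$, $E_6$) could a priori sit inside $\norm GP$ even when $G/S$ is forced to be a $p$-group — but for $p$ odd a $p$-group $G/S$ simply cannot contain such an order-$2$ automorphism, which resolves it.
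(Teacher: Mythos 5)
Your overall strategy (CFSG case division, observing that $p$ is odd because $P\le\bfN_G(P)$ has odd order, and that $G/S\cong P/(P\cap S)$ is a $p$-group so no outer involutions intrude, then splitting defining from cross characteristic) coincides with the paper's, but the proposal is missing the one idea that makes the case analysis tractable, and without it the ``bookkeeping'' you defer is not actually carried out. The paper's engine is Lemma~\ref{real}: choose $1\ne z\in\Omega_1(\bfZ(P))\cap S$; since $p>2$ and $|\bfN_G(P)|$ is odd, $z$ is not inverted in $\bfN_G(P)$, hence by Burnside's fusion-control lemma $z$ is \emph{non-real} in $G$. Combined with the Tiep--Zalesskii results \cite{TZ} on reality of semisimple and unipotent classes, this instantly eliminates every Lie type in which all such elements are real, leaving in cross characteristic only $A_n$, $\tw2 A_n$, $D_n/\tw2 D_n$ with $n$ odd, and $E_6^{\pm}$, and in defining characteristic only $q\equiv 3\pmod 4$ plus a couple of exceptional possibilities. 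Your substitute criterion --- exhibit an involution in $\bfN_S(P)$, e.g.\ inside the normalizer of a Sylow $\Phi_d$-torus --- is the right kind of statement but much harder to execute: the candidate involutions must be shown to lie in $S$ itself (determinant and spinor-norm conditions for $SL^{\eps}_n$ and $\Omega^{\pm}_{2n}$), to survive the central quotient, and to be fixed by the field-automorphism part of $P$ when $G>S$; this is exactly the content of Propositions~\ref{slu}, \ref{ortho} and \ref{e6}, and nothing in your sketch supplies it. Indeed, in the case $n=p$, $p\mid(q-\eps)$ of $PSL^\eps_n(q)$ no involution in the normalizer is produced at all: the paper instead exhibits a \emph{real} $p$-central element and contradicts Lemma~\ref{real}, so the reality criterion is not merely a convenience there.

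Two smaller but genuine slips. In the alternating case the parity is reversed: $|\bfN_{\AAA_p}(P)|=p(p-1)/2$ is \emph{odd} precisely when $p\equiv 3\pmod 4$ (then $(p-1)/2$ is odd), not when $p\equiv 1\pmod 4$; this does not affect your conclusion, since both subcases have cyclic $P$, but it indicates the parity checks need more care than the sketch gives them. In defining characteristic, ``$T^F$ contains an involution'' is not sufficient: one needs an involution of $T$ fixed by the field automorphism generating $P$ over $S$ whose image in $S=L/\bfZ(L)$ is nontrivial, and for type $D_n$ the center has $2$-rank $2$, so the paper compares the $2$-rank of $T^\sigma$ with that of $\bfZ(L)$. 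It is precisely this central-quotient computation that singles out $PSL_2(q)$ with $q\equiv 3\pmod 4$ as the unique survivor, and your sketch compresses it to ``one then checks.''
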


First we prove Theorem A, which we restate:

 \begin{thm}
 Let $G$ be a finite group, let $p$ be a prime, and $P \in \syl pG$.
 Assume that $|\norm GP|$ is odd. If $S$ is a non-abelian composition
 factor of $G$, then  $|S|$ is divisible by $p$, and
 either $S$ has cyclic Sylow $p$-subgroups or $S=PSL_2(q)$,
 for some power $q = p^f \equiv 3 (\mod 4)$. 
 %Also, if $p=3$, then $S=PSL_2(3^f)$ for some odd $f > 1$.
 \end{thm}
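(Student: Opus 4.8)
The plan is to deduce Theorem A (the general case) from Theorem~\ref{main:simple} (the almost simple case) by a standard reduction, together with the observation that the hypothesis on $\norm GP$ passes to the relevant subgroups and quotients. First I would record the elementary fact that if $H \le G$ and $P \in \syl pG$ with $Q = H \cap P^g \in \syl pH$ for a suitable $g$, then $\norm HQ \le \norm GQ$; if additionally $|\norm GR|$ is odd for every Sylow $p$-subgroup $R$ of $G$ (equivalently for one of them, by conjugacy), then $|\norm HQ|$ is odd as well. Likewise, if $N \nor G$ and $\bar G = G/N$, then $PN/N \in \syl p{\bar G}$ and $\norm{\bar G}{PN/N} = \norm GP N/N$ is a quotient of a section of $\norm GP$, hence again of odd order. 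Thus the class of pairs $(G,p)$ with $|\norm GP|$ odd is closed under passing to subgroups (taking the induced Sylow) and to quotients.

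Now let $S$ be a non-abelian composition factor of $G$. Choose a chief factor $H/K$ of $G$, with $K \nor H \nor G$, such that $S$ is a composition factor of $H/K$; then $H/K \cong S_1 \times \cdots \times S_r$ with each $S_i \cong S$. Set $\bar G = G/K$, which still satisfies the odd-normalizer hypothesis, and let $M = H/K = \soc$-part, a minimal normal subgroup of $\bar G$ isomorphic to $S^r$. I would first dispose of the case $p \nmid |S|$: if $p \nmid |S|$ then $p \nmid |M|$, so a Sylow $p$-subgroup of $\bar G$ centralizes... no — rather, one argues via $\norm{\bar G}{\bar P}$ acting on $M$. Here it is cleaner to note that $\bar G$ has a subgroup $L$ with $M \le L$ and $L/\cent L M$ almost simple with socle $\cong S$ (the normalizer of one factor $S_1$ modulo the kernel of its action); choosing Sylow $p$-subgroups compatibly, the hypothesis descends to an almost simple group $A$ with socle $S$, $p \mid |S|$ forced, and $|\norm A{P_A}|$ odd with $A = SP_A$ — unless $p \nmid |S|$, which must be ruled out separately. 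For the latter, if $p \nmid |S|$, then in the almost simple quotient $A$ the socle $S$ has order prime to $p$, so $P_A \cong P_A S/S \le A/S \le \Out(S)$; but $\Out(S)$ has even order for every non-abelian simple $S$ of order prime to any fixed odd prime... this is not literally true, so instead I would argue directly inside $\bar G$: a minimal counterexample has $\bfO_{p'}(G) = 1$, and then $\cent GM \cap$ (a complement)... The crispest route: reduce to $\bfO_{p'}(G)=1=\bfO_p(G)$ in a minimal counterexample, so $\soc(G)$ is a direct product of non-abelian simple groups each of order divisible by $p$ (a Sylow $p$-subgroup of $G$ acts faithfully, and any $p'$-simple factor would be centralized by $P$ giving an even contribution to $\norm GP$ via the $p$-part structure — to be made precise), and then $G/\soc(G) \hookrightarrow \Out(\soc(G))$, reducing to the diagonal-type analysis.

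Once the reduction to the almost simple situation is in place, apply Theorem~\ref{main:simple} directly: it gives exactly that $p \mid |S|$ and either $S$ has cyclic Sylow $p$-subgroups or $S = PSL_2(q)$ with $q = p^f \equiv 3 \pmod 4$, which is precisely the conclusion of Theorem~A. The one subtlety in the handoff is ensuring the hypothesis $A = SP_A$ (not merely $A = S\cdot(\text{something})$): I would arrange this by replacing the relevant section of $\bar G$ by $S \cdot P'$ where $P'$ is a Sylow $p$-subgroup of that section, which is legitimate since we only need information about \emph{composition factors}, and $S \cdot P'$ has the same socle $S$ and still an odd Sylow normalizer.

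The main obstacle I expect is the bookkeeping around several copies $S^r$ in a chief factor and the action of $G$ permuting them: one must verify that the odd-normalizer condition, when restricted to the stabilizer of a single factor $S_1$ and then pushed to the almost simple group $\Inn(S_1)\cdot(\text{induced outer part})$, really does survive — the permutation action on the $r$ factors could a priori inject even-order elements into the picture, so the argument needs that a Sylow $p$-subgroup and its normalizer interact correctly with a wreath product $S \wr (\text{transitive } p\text{-group})$ structure on $\soc$. Handling this cleanly — probably by first reducing to $r=1$ via the observation that $\norm GP$ surjects onto $\norm{G/K}{\cdots}$ and then onto the point stabilizer's almost simple quotient, all of odd order — is the crux; everything else is the routine minimal-counterexample setup plus citing Theorem~\ref{main:simple}.
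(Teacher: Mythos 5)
There are two genuine gaps here, both at points you yourself flag as uncertain, and one outright false claim. First, the assertion that the odd-normalizer condition ``is closed under passing to subgroups'' is not true: for $H\le G$ and $Q=H\cap P\in\syl pH$, you only get $\norm HQ\le \norm GQ$, and $\norm GQ$ is the normalizer of a subgroup that is generally \emph{not} Sylow in $G$, so its oddness does not follow from that of $\norm GP$ (take $H$ any $p'$-subgroup of even order: then $Q=1$ and $\norm HQ=H$). The condition does pass to quotients, and to subgroups of the form $NP$ with $N\nor G$ (since these contain a full Sylow $P$ of $G$ and $\norm{NP}P\le\norm GP$) --- and that is exactly the restricted form of heredity the paper uses. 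Your reduction to the almost simple group $A$ with socle $S$ leans on the false general version, because the stabilizer of one simple factor is not of the form $NP$.

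Second, the step you call ``the crux'' --- getting from the wreath-type minimal normal subgroup $N\cong S^t$ permuted by $P$ down to an almost simple group $S\cdot P_1$ that still has an odd Sylow normalizer --- is indeed the crux, and your sketch (``$\norm GP$ surjects onto \dots the point stabilizer's almost simple quotient'') does not work as stated and is not what the paper does. The paper argues in the opposite direction: writing $N=S^{x_1}\times\cdots\times S^{x_t}$ with the $x_i$ coset representatives of $\norm PS$ in $P$, it shows that an involution $xR$ in $\cent{\norm SR/R}{P_1}$ (where $R=P\cap S$, $P_1=\norm PS$) can be spread diagonally as $z=\prod_{u}x^u\in\norm NQ$ with $Q=P\cap N$, producing an involution in $\cent{\norm NQ/Q}P\cong\norm GP/P$, a contradiction. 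This explicit ``diagonal product over the $P$-orbit'' construction is the missing idea; without it the handoff to Theorem~2.1 is unjustified. Finally, your treatment of the case $p\nmid|S|$ trails off without a conclusion; the paper settles it by noting that if $p\nmid|N|$ then $\norm NP=\cent NP$ has odd order, whence $N$ is solvable by the CFSG-based coprime-action result \cite[Theorem 3.4]{IMN1}. That external ingredient is needed and absent from your proposal.
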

 
 \begin{proof}
We argue by induction on $|G|$.
Since the hypotheses are inherited by factor groups,
 it is enough to show that if $N$ is a minimal  non-abelian normal
 subgroup of $G$, and $S \nor N$ is simple, then
 $|S|$ is divisible by $p$ and either $S$ has cyclic Sylow $p$-subgroups or $S=PSL_2(q)$,
 for some prime power $q = p^f \equiv 3 (\mod 4)$. First of all, if $|N|$ is not divisible by $p$,
 then $\cent NP=\norm NP$ has odd order, and therefore $N$ is solvable
 by a standard application of the Classification of Finite Simple Groups to coprime action
 (see Theorem 3.4 of \cite{IMN1}).
 Hence, we may assume that $|S|$ has order divisible by $p$.
 Now, $NP$ has an odd order Sylow normalizer, so we may assume that $NP=G$.
 Let $Q=P\cap N$. We have that $\norm GP/P$ is isomorphic to
 $\cent{\norm NQ/Q}P$.
 
 We can write $N=S^{x_1} \times \ldots \times S^{x_t}$, where  
 $U=\{x_1, \ldots, x_t\}$ is a complete set of representatives of right cosets
 of $\norm PS$ in $P$. Set $R=P\cap S=Q\cap S$, $H=S\norm P S$
 and $P_1=\norm PS \in \syl p H$. We have that $\norm H{P_1}/P_1$ is isomorphic
 to $\cent {\norm SR/R}{P_1}$.
 If $xR \in \cent {\norm SR/R}{P_1}$ is an involution,
 then it is straightforward to check that 
 $$z:=\prod_{u \in U} x^u \in \norm NQ$$
  and that $zQ$ is an involution centralized by $P$. But this is impossible since
  $\cent{\norm NQ/Q}P$ has odd order by hypothesis. Hence,
  we conclude that $H$ is an almost simple group with socle $S$ and $H/S$ a $p$-group
  having a Sylow $p$-subgroup $P_1$ with normalizer of odd order. Now we apply Theorem \ref{main:simple}.
  \end{proof}

Once Theorem A is completed, Corollary B, which we restate, easily follows.
 
 \begin{cor}
 Let $G$ be a finite group, let $p$ be a prime and $P \in \syl pG$.
If $|\norm GP|$ is odd, then 
$$|\irrp G|=|\irrp{\norm GP}| \, .$$
Moreover, the Alperin-McKay conjecture and the blockwise Alperin weight conjecture hold
true for $G$, for the prime $p$.
  \end{cor}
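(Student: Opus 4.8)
The plan is to deduce Corollary B from Theorem A by invoking the reduction theorems already available in the literature. The point is that Theorem A tells us that every non-abelian composition factor $S$ of $G$ with $p \mid |G|$ is either a group with cyclic Sylow $p$-subgroups or $\PSL_2(q)$ with $q=p^f\equiv 3\ (\mathrm{mod}\ 4)$, and for such $S$ the relevant "inductive conditions" are known to hold. First I would recall the inductive McKay (more precisely inductive Alperin--McKay) condition of Isaacs--Malle--Navarro and Sp\"ath: if every non-abelian simple group involved in $G$ satisfies the inductive Alperin--McKay condition at $p$, then the Alperin--McKay conjecture (and hence the McKay equality $|\irrp G|=|\irrp{\norm GP}|$, taking the block containing the principal block summands, or rather summing over blocks) holds for $G$ at $p$. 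Likewise, by the reduction theorem for the blockwise Alperin weight conjecture (Navarro--Tiep, and Sp\"ath), if every non-abelian simple group involved in $G$ satisfies the inductive blockwise Alperin weight condition at $p$, then the blockwise Alperin weight conjecture holds for $G$ at $p$.

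So the argument reduces to checking that the simple groups permitted by Theorem A satisfy these inductive conditions at $p$. Here I would split into the two cases. For a simple group $S$ with cyclic Sylow $p$-subgroups, the inductive Alperin--McKay and inductive blockwise Alperin weight conditions at $p$ are known by the work of Koshitani--Sp\"ath (\cite{KS}), building on Dade's theory of blocks with cyclic defect group and the determination of Brauer trees; this covers, in particular, all the "generic" almost simple groups appearing in Theorem A. For $S=\PSL_2(q)$ with $q=p^f\equiv 3\ (\mathrm{mod}\ 4)$ — where the Sylow $p$-subgroup need not be cyclic — I would cite the verification of the inductive conditions for $\PSL_2(q)$ at the defining characteristic carried out in \cite{S1} and \cite{S2} (Sp\"ath). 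Thus every non-abelian simple group involved in $G$ (at the prime $p$) satisfies both inductive conditions, and the reduction theorems apply to $G$ itself.

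It remains only to handle the primes (or simple groups) not divisible by $p$: if $p \nmid |S|$ for a composition factor $S$, then $S$ is irrelevant to the $p$-local counting statements, and in any event the reduction theorems only require the inductive condition for those simple groups of order divisible by $p$ that are involved in $G$. Assembling these pieces: Theorem A identifies the possibilities for the non-abelian composition factors, \cite{KS}, \cite{S1}, \cite{S2} verify the inductive Alperin--McKay and inductive blockwise Alperin weight conditions for each of them, and the reduction theorems of \cite{IMN2} (and the subsequent work on the Alperin weight conjecture) then yield the Alperin--McKay conjecture and the blockwise Alperin weight conjecture for $G$ at $p$. The equality $|\irrp G|=|\irrp{\norm GP}|$ is the McKay statement, which follows from Alperin--McKay by summing over blocks (or is directly a consequence of the inductive McKay condition).

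The main obstacle — or rather the only real content beyond bookkeeping — is making sure the class of simple groups produced by Theorem A is exactly covered by the cited verifications: one must check that "cyclic Sylow $p$-subgroup" simple groups fall under the hypotheses of \cite{KS} (they do, since cyclic defect implies the Brauer tree machinery applies uniformly) and that the single extra family $\PSL_2(q)$, $q\equiv 3\ (\mathrm{mod}\ 4)$, $p\mid q$, is precisely the defining-characteristic $\PSL_2$ case treated in \cite{S1,S2}. Once that alignment is confirmed, no further simple-group analysis is needed and the corollary follows formally from the reduction theorems.
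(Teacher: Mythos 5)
Your overall strategy is the same as the paper's: combine Theorem A with the reduction theorems of \cite{IMN2}, \cite{S1}, \cite{S2} and the verifications of the inductive conditions in \cite{KS} (cyclic Sylow case) and \cite{S1}, \cite{S2} ($PSL_2(q)$, $q\equiv 3 \pmod 4$, defining characteristic). However, there is a genuine gap at the point where you pass from ``the simple groups permitted by Theorem A'' to ``every non-abelian simple group involved in $G$.'' You correctly state that the reduction theorems require the inductive condition for all simple groups \emph{involved} in $G$, i.e.\ all composition factors of all subgroups of $G$ --- but Theorem A only constrains the composition factors of $G$ itself. A subgroup of a composition factor $S$ of $G$ may have composition factors that are not composition factors of $G$, so you must separately verify that these, too, lie in the allowed classes. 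The paper closes this gap explicitly: for $S$ with cyclic Sylow $p$-subgroups this is elementary (subgroups inherit cyclic Sylow $p$-subgroups, as do their composition factors), and for $S = PSL_2(q)$ with $q = p^f \equiv 3 \pmod 4$ one invokes Dickson's classification of subgroups of $PSL_2(q)$ and checks that the relevant subfield subgroups $PSL_2(p^{f'})$ again satisfy $p^{f'} \equiv 3 \pmod 4$ (because $p \equiv 3 \pmod 4$ and $f' \mid f$ with $f$ odd), while any other composition factor of order divisible by $p$ has cyclic Sylow $p$-subgroups. Your closing paragraph gestures at an ``alignment'' check but identifies the wrong issue (whether the cited papers cover the listed groups) rather than this one.

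A second, smaller omission: the inductive conditions, and in particular the hypotheses of \cite{KS}, are formulated for the universal covering group of $S$, so one must know that cyclic Sylow $p$-subgroups of $S$ force cyclic Sylow $p$-subgroups of its universal cover. The paper handles this by noting that a prime dividing the order of the Schur multiplier of a non-abelian simple group has non-cyclic Sylow subgroups in that group (\cite[Corollary (11.21)]{Is}); your appeal to ``the Brauer tree machinery'' does not address this point. Both gaps are repairable, but they are exactly the content of the paper's proof beyond the formal citation of reduction theorems.
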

  
  \begin{proof}
 It is well known (see e.g. \cite[Corollary (11.21)]{Is})
 that if a prime $r$ divides the order of the Schur multiplier of a non-abelian simple group 
 $S$ then the Sylow $r$-subgroups of $S$ are non-cyclic. In particular, 
 if Sylow $p$-subgroups of $S$ are cyclic, then the same
 holds for its universal cover. Hence, it follows from \cite[Theorem 1.1, Corollary 6.9, Corollary 7.1]{KS} 
 that simple groups with cyclic Sylow $p$-subgroups satisfy the inductive
  Alperin-McKay condition
  and the inductive condition for the blockwise Alperin weight conjecture (for the prime $p$).
 Next, it is proved in  \cite{S1} and \cite{S2}
 that $PSL_2(q)$ with $q =p^f \equiv 3 (\mod 4)$
  also satisfies both inductive conditions 
  (although the case $q=3^f >9$ is not explicitly stated there, but follows with the
  same argument). 
  %The case $PSL_2(9)$ follows from ***. 
  Now, if $G$ is a finite group with odd Sylow normalizers, it follows 
  from Theorem A, elementary group theory, and the classification
  of the subgroups of $PSL_2(q)$ with $q \equiv 3 (\mod 4)$, that every non-abelian composition factor
  of order divisible by $p$ of every subgroup of $G$ either has cyclic Sylow $p$-subgroups, or 
  is isomorphic to some $PSL_2(q')$ with $q' \equiv 3 (\mod 4)$. 
 %%Note that $A_5$ can occur, say in $PSL_2(19)$, but then it has cyclic Sylow subgroups. 
  Hence the main results of \cite{IMN2}, \cite{S1} and \cite{S2} apply.
  \end{proof}
 
 \section{Proof of Theorem \ref{main:simple}}
 
 We begin with a simple observation:
 
 \begin{lem}\label{real}
 Keep the notation and hypothesis of Theorem \ref{main:simple}. Then $S \cap P$ contains a non-real 
 $p$-central element $z$ of order $p$. In fact, every $p$-element $1 \neq y \in S$ that is $p$-central in 
 $G$ must be non-real.
 \end{lem}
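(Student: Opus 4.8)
The plan is to exploit the oddness of $\bfN_G(P)$ twice. Let $P_1 \in \Syl_p(G)$ and set $R = P_1 \cap S \in \Syl_p(S)$. Since $|\bfN_G(P_1)|$ is odd, in particular $P_1$ itself has odd order, so every nontrivial $p$-central element $y$ of $G$ lying in $P_1$ has odd order; I want to show such $y$ cannot be real in $G$. Suppose $g \in G$ inverts $y$. Then $g$ normalizes $\langle y \rangle$, hence normalizes $Z := \bfC_{P_1}(Z(P_1)) \supseteq \langle y \rangle$ — more precisely, since $y$ is $p$-central, $y \in Z(P_1)$, and $g$ normalizes $\bfC_G(y) \supseteq P_1 \cdot \bfO^p(\bfC_G(y))$. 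Replacing $g$ by a suitable element of the coset $g\bfC_G(y)$ via a Frattini/conjugacy argument inside $\bfC_G(y)$, I can arrange that $g$ normalizes $P_1$. But then $g \in \bfN_G(P_1)$, a group of odd order, so $g$ has odd order; since $g$ inverts $y$ and $y$ has odd order, $g^2$ centralizes $y$ and $\langle g \rangle$ acts on $\langle y\rangle$ through a group of order dividing $2$, forcing (as $|g|$ is odd) $g$ to centralize $y$, i.e. $y = y^g = y^{-1}$, so $y = 1$, a contradiction. This proves the second assertion: every $p$-central $1 \neq y \in S$ that is $p$-central in $G$ is non-real (in $G$, hence also in $S$).

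For the first assertion I must produce such a $z$ of order exactly $p$ inside $R$. Take $y$ to be any nontrivial element of $Z(P_1) \cap S$: since $R \trianglelefteq P_1$ (as $S \trianglelefteq G$) and $R \neq 1$ because $p \mid |S|$, we have $R \cap Z(P_1) \neq 1$, so pick $y$ in there of order $p$. This $y$ is $p$-central in $G$ by construction (it lies in $Z(P_1)$), so by the previous paragraph it is non-real in $G$; and since $\bfN_G(\langle y\rangle)$ meets the $G$-class issue only through $G$, a fortiori $y$ is non-real in $S$ as well. Setting $z = y$ finishes it.

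The step I expect to be the main obstacle is the reduction "$g$ can be taken to normalize $P_1$." The clean way is: $P_1 \in \Syl_p(\bfC_G(y))$ since $y \in Z(P_1)$, so for any $g$ inverting $y$ we have $y^g = y^{-1}$, hence $g$ conjugates $P_1$ to a Sylow $p$-subgroup of $\bfC_G(y^{-1}) = \bfC_G(y)$; by Sylow's theorem there is $c \in \bfC_G(y)$ with $P_1^{gc} = P_1$, and $gc$ still inverts $y$ because $c$ centralizes $y$. Thus $gc \in \bfN_G(P_1)$ inverts $y$, and the odd-order argument applies to $gc$ in place of $g$. Once this is in place the rest is the short parity computation above. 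One should double-check the harmless point that $R \cap Z(P_1) \neq 1$: this holds because $R$ is a nontrivial normal subgroup of the $p$-group $P_1$, so it meets the center nontrivially.
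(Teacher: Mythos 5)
Your argument is correct and is essentially the paper's proof: both pick $z$ of order $p$ in $\bfZ(P)\cap S$ (nontrivial since $1\neq P\cap S\nor P$) and reduce any inverting element to one in $\bfN_G(P)$, which is impossible in a group of odd order. The only difference is that the paper cites Burnside's fusion control lemma for the reduction, whereas you prove that step inline via the Sylow argument in $\bfC_G(y)$.
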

 
 \begin{proof}
 By assumption, $1 \neq Q := P \cap S \nor P$, and so we can choose $1 \neq z \in \Omega_1(\bfZ(P)) \cap Q$. Since $p > 2$
 and $2 \nmid |\bfN_G(P)|$, $z$ is not real in $\bfN_G(P)$, whence $z$ is not real in $G$ (and so in $S$) by 
 Burnside's fusion control lemma. The second conclusion also follows by applying the same argument to $y$.
 \end{proof}
 
 \begin{cor}\label{simple1}
 Suppose that the simple group $S$ in Theorem \ref{main:simple} is of Lie type defined over $\FF_q$.
 
 \begin{enumerate}[\rm (i)]
 
 \item If $p \nmid q$, then $S$ 
 can only be possibly of types $A_n$ or $\tw2 A_n$ with $n \geq 2$, $D_n$ or $\tw2 D_n$ with $2 \nmid n \geq 5$,
 $E_6$ or $\tw2 E_6$. 
 
 \item Suppose $p|q$. Then either $q \equiv 3 (\mod 4)$, or $p \in \{3,5\}$ and $S$ is an exceptional group of Lie type. 
 Furthermore, $S$ is not of types $D_4$, $\tw 2D_4$, $\tw3 D_4$.
 \end{enumerate}
 \end{cor}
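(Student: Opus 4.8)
The plan is to feed the non-real $p$-central element $z$ of order $p$ provided by Lemma~\ref{real} into the reality theory of finite groups of Lie type, treating separately the semisimple case ($p\nmid q$) and the unipotent case ($p\mid q$). In each case I would show that, for every type \emph{not} appearing in the conclusion, $z$ is in fact real in $S$ --- contradicting Lemma~\ref{real} --- so those types cannot occur.

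\textbf{Part (i), the case $p\nmid q$.} Here $z$ is a non-real semisimple element of $S$. The key point is that the families \emph{not} on the list --- $A_1$; $B_n$, $C_n$; $D_n$ and $\tw2 D_n$ with $n$ even (in particular $D_4$); $\tw3 D_4$; $E_7$, $E_8$, $F_4$, $G_2$; and $\tw2 B_2$, $\tw2 G_2$, $\tw2 F_4$ --- are exactly those whose root system satisfies $w_0=-1\in W$, while the six surviving families are precisely those with $-1\notin W$. For a type with $-1\in W$ I would pass to a simply connected $\tilde{\mathbf G}$ with a Steinberg endomorphism $F$ for which $\tilde{\mathbf G}^F$ surjects onto $S$: any semisimple $s\in\tilde{\mathbf G}^F$ lies in some maximal torus $\mathbf T_0$, a representative of $-1\in W(\mathbf T_0)$ inverts $\mathbf T_0$ and hence $s$, and since $\tilde{\mathbf G}$ is simply connected $C_{\tilde{\mathbf G}}(s)$ is connected; as $F(s)=s$, the coset of inverting elements is $F$-stable, so Lang--Steinberg produces an inverting element inside $\tilde{\mathbf G}^F$. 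Thus every semisimple element of $\tilde{\mathbf G}^F$, and so of the quotient $S$, is real. (For the Suzuki and Ree groups $\tilde{\mathbf G}$ is $Sp_4$, $G_2$ or $F_4$ over an algebraically closed field of characteristic $2$ or $3$, which are simply connected with $-1$ in their Weyl groups, so the same argument runs.) This rules out every type except the six in the statement.

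\textbf{Part (ii), the case $p\mid q$.} Now $p$ odd forces $q$ odd; $z$ is a nontrivial $p$-central unipotent element of order $p$, so up to conjugacy $P=\mathbf U^F$ is a maximal unipotent and $z\in Z(\mathbf U^F)$. For a simple algebraic group $Z(\mathbf U)$ is the single root subgroup $X_{\tilde\alpha}$ of the highest root $\tilde\alpha$, and when $p$ is a \emph{good} prime $Z(\mathbf U^F)$ is its set of $F$-fixed points (for the twisted families $Z(\mathbf U^F)$ need not be one root subgroup --- e.g.\ for unitary groups it is a trace-zero subgroup of $\FF_{q^2}$ --- but the analysis is parallel). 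One then has $x_{\tilde\alpha}(t)\sim x_{\tilde\alpha}(ct)$ in $\mathbf G^F$ iff $c$ lies in the subgroup $\Lambda\le\FF_q^*$ by which $\mathbf T^F$, together with $N(\mathbf T)$ and $\mathbf U$, can rescale $X_{\tilde\alpha}$, so $z$ is real iff $-1\in\Lambda$. The highest root $\tilde\alpha$ is a \emph{fundamental weight} (hence primitive in the weight lattice) for every type except $A_1$ and $C_n$, where $\tilde\alpha=2\varpi_1$. In the primitive case $\Lambda=\FF_q^*$, so $z$ is real --- a contradiction excluding the type; this disposes of $A_n$ ($n\ge2$), $B_n$, $D_n$, the twisted classical families, and --- crucially --- $D_4$, $\tw2 D_4$, $\tw3 D_4$ (for which $p$ is automatically good and $\tilde\alpha=\varpi_2$). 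In the remaining case $\tilde\alpha=2\varpi_1$ one has $\Lambda=(\FF_q^*)^2$, so $z$ is real iff $-1$ is a square in $\FF_q$, i.e.\ iff $q\equiv1\pmod 4$; hence non-reality of $z$ forces $q\equiv3\pmod 4$.

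\textbf{The bad-prime exception and the main obstacle.} The good-prime hypothesis is exactly what makes $Z(\mathbf U^F)$ transparent; for a \emph{bad} prime the structure of $\mathbf U$ and of the unipotent classes is irregular and the above argument collapses. The odd bad primes of the exceptional groups are precisely $3$ (for $G_2,F_4,E_6,E_7,E_8$) and $5$ (for $E_8$), whereas the only bad prime of a classical type is $2$; so the sole way to escape the dichotomy of Part~(ii) is that $p\in\{3,5\}$ and $S$ is exceptional, which is the remaining alternative in the statement. I expect the technical heart of the proof to be exactly this Part~(ii) analysis for the twisted and exceptional groups --- pinning down $Z(\mathbf U^F)$, computing $\Lambda$, and checking that only in the bad-prime cases $p\in\{3,5\}$ does the clean picture fail --- together with the (more routine but still delicate) descent from ``real in the algebraic group'' to ``real in $S$'' via Lang--Steinberg and the passage through the central quotient.
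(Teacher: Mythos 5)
Your strategy is exactly the paper's: feed the non-real $p$-central element $z$ of Lemma \ref{real} into reality results for groups of Lie type, handling the semisimple case via $-1\in W$ and Lang--Steinberg and the unipotent case via the torus action on the long root subgroup. The paper's actual proof is a two-line citation of Tiep--Zalesskii ([TZ, Prop.~3.1] for semisimple elements, [TZ, Lemma 2.2 and Thm.~1.4] for unipotent ones), which are precisely the reality statements you sketch; your sketch is essentially sound, apart from the harmless slip that for type $A_n$ with $n\ge 2$ the highest root equals $\varpi_1+\varpi_n$ (not a fundamental weight, though still primitive, which is all the argument needs) and the admitted hand-waving in the twisted defining-characteristic cases.
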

 
 \begin{proof}
Recall that $p$ is odd.

 (i) Assume the contrary. By \cite[Proposition 3.1]{TZ} all semisimple elements in $G$ are real. In particular, the element $z$ 
 described in Lemma \ref{real} is real, a contradiction. Note that our argument also applies to $\tw2 F_4(2)'$.
 
 (ii) Assume the contrary. By Lemma 2.2 and Theorem 1.4 of \cite{TZ}, all unipotent elements in $G$ are real. Hence the element $z$ 
 described in Lemma \ref{real} is real, again a contradiction.
 \end{proof}
 
\begin{lem}\label{alt}
Let $S = \AAA_n$ be an alternating group.
 
\begin{enumerate}[\rm(i)]
 
\item If $n \geq p+2$, then any element $t$ of order $p$ in $S$ is real. In particular,
 $|\bfN_S(Q)|$ is even for $Q \in \Syl_2(S)$.

\item Similarly, if $n \geq 2$ and $P \in \Syl_p(\SSS_n)$, then $|\bfN_{\SSS_n}(P)|$ is even.
 
\item Theorem \ref{main:simple} holds if $S = \AAA_n$ is an alternating group.
\end{enumerate}
\end{lem}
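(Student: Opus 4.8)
The plan is to handle the three parts in order, using reality of $p$-elements as the main tool (in the spirit of Lemma \ref{real}). For part (i), I would take an element $t \in S = \AAA_n$ of order $p$; its cycle type consists of some number $k \geq 1$ of $p$-cycles together with fixed points. A single $p$-cycle $(a_1 a_2 \ldots a_p)$ is inverted by the permutation reversing the order of its support, which is a product of $\lfloor p/2 \rfloor$ transpositions; doing this on each of the $k$ cycles gives an element $\sigma \in \SSS_n$ with $\sigma t \sigma^{-1} = t^{-1}$ and $\sigma$ of sign $(-1)^{k\lfloor p/2\rfloor}$. If $k\lfloor p/2\rfloor$ is even we are done. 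If it is odd, then $p$ is odd so $\lfloor p/2 \rfloor$ being odd forces $p \equiv 3 \pmod 4$ (not needed) and $k$ is odd, so in particular $n \geq kp \geq p$; since $n \geq p+2$ there are at least two fixed points $b_1, b_2$ of $t$, and multiplying $\sigma$ by the transposition $(b_1 b_2)$ (which commutes with $t$) produces an even permutation still inverting $t$. Hence $t$ is real in $S$. The ``in particular'' statement then follows exactly as in Corollary \ref{simple1}: an element of order $p$ lies in some $\bfN_S(Q)$ up to conjugacy only if we argue via $2$-locals, so instead I would phrase it directly — by part (ii) applied inside $\SSS_n$ together with the fact that $|\SSS_n : \AAA_n| = 2$, or more cleanly: if $|\bfN_S(Q)|$ were odd for $Q \in \Syl_2(S)$ then by Lemma \ref{real} no nontrivial $2$-central element of $S$ would be real, contradicting that every element of a $2$-group is conjugate to its inverse in a large enough symmetric/alternating group — concretely, for $n \geq p+2 \geq 5$ one has $n \geq 5$, and one checks directly that $\AAA_n$ always has an even permutation inverting a chosen $2$-central involution. (I expect part (i) to be stated mainly as a stepping stone, so I would keep this tight.)

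For part (ii), let $P \in \Syl_p(\SSS_n)$ and write $n = \sum a_i p^i$ in base $p$; then $P \cong \prod_i (C_p \wr \cdots \wr C_p)^{a_i}$ (iterated wreath products), acting on a partition of $\{1,\ldots,n\}$ into blocks. The normalizer $\bfN_{\SSS_n}(P)$ permutes the blocks of equal size and, on each, induces the full normalizer of an iterated wreath product in the relevant symmetric group. I would exhibit an explicit even-order element: either $\bfN_{\SSS_n}(P)$ already contains an odd permutation (e.g. a transposition of two size-one blocks when $a_0 \geq 2$, or the known fact that $\bfN_{\SSS_p}(C_p) = C_p \rtimes C_{p-1}$ has even order since $p - 1$ is even, giving an element of order $2$), so $|\bfN_{\SSS_n}(P)|$ is even. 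The cleanest uniform argument: $\bfN_{\SSS_p}(C_p)$ has order $p(p-1)$, which is even; embedding this normalizer block-diagonally (or at the bottom level of the wreath tower) shows $2 \mid |\bfN_{\SSS_n}(P)|$ for every $n \geq p$, and for $2 \leq n < p$ the Sylow $p$-subgroup is trivial and $\bfN_{\SSS_n}(P) = \SSS_n$ has even order.

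For part (iii), I would combine (i), (ii) and Corollary \ref{simple1}. By Corollary \ref{simple1}(i) with $S$ viewed as a group of Lie type we get no information (alternating groups are handled separately), so instead argue directly: if $S = \AAA_n$ satisfies the hypotheses of Theorem \ref{main:simple} with $G = SP$ and $|\bfN_G(P)|$ odd, then in particular $|\bfN_S(P \cap S)|$ is odd. If $S$ had non-cyclic Sylow $p$-subgroups, then $n \geq 2p$ (two or more $p$-cycles' worth of support) — wait, more carefully: $\Syl_p(\AAA_n)$ is cyclic exactly when $n < p^2$ and $n \leq 2p-1$, i.e. when a Sylow $p$-subgroup is a single $C_p$; equivalently noncyclic forces $n \geq 2p$ or $n \geq p^2$, and in either case $n \geq p + 2$. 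Then part (i) gives $|\bfN_S(Q)|$ even for $Q \in \Syl_2(S)$, but $\bfN_G(P)$ odd forces (by Lemma \ref{real} applied to $G$, or by a transfer/fusion argument) a contradiction with reality — more directly, $n \geq p+2$ lets me invoke part (i) to produce a real element of order $p$ that is $p$-central in $S$, and if it is $p$-central in $G$ as well this contradicts Lemma \ref{real}; since $P \cap S \nor P$ and $\bfZ(P) \cap (P\cap S) \neq 1$, such an element exists. Hence $\Syl_p(\AAA_n)$ must be cyclic, which is one of the allowed conclusions of Theorem \ref{main:simple}; the group $\PSL_2(q)$ alternative does not arise here (or arises only through the sporadic isomorphism $\AAA_5 \cong \PSL_2(4) \cong \PSL_2(5)$ and $\AAA_6 \cong \PSL_2(9)$, none of which has $q \equiv 3 \bmod 4$, and for these small cases the Sylow subgroup is already cyclic, consistent with the statement).

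\medskip

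The main obstacle I anticipate is part (iii): bridging ``$|\bfN_G(P)|$ odd'' to ``$|\bfN_S(\text{Sylow } 2)|$ even'' requires care, because part (i) produces reality of $p$-elements but the hypothesis is about $2$-structure. The clean path is to not go through $2$-normalizers at all in (iii), but rather: assume $\Syl_p(\AAA_n)$ non-cyclic, deduce $n \geq p+2$, apply part (i) to get that every element of order $p$ in $S$ is real — in particular the $p$-central element $z$ of Lemma \ref{real} is real, contradicting Lemma \ref{real}. Thus the role of the ``$|\bfN_S(Q)|$ even'' clause in (i) is auxiliary/illustrative, and the real engine is reality of order-$p$ elements. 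I would make sure the write-up foregrounds that logical flow so that (iii) is a two-line deduction from (i) and Lemma \ref{real}.
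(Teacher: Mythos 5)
Your part (iii) is correct and is essentially the paper's own argument: non-cyclic Sylow $p$-subgroups force $n\geq p+2$, and then part (i) makes the $p$-central element $z$ of Lemma \ref{real} real, a contradiction (the paper phrases this as: (i) forces $n=p$ or $p+1$, so the Sylow $p$-subgroups are cyclic). Note also that $G=S$ here, since $\Out(\AAA_n)$ is a $2$-group and $G/S$ is a $p$-group with $p$ odd, so your worry about $z$ being $p$-central in $G$ versus in $S$ evaporates. Part (ii) is correct but heavier than needed: the paper just observes that every element of $\SSS_n$ is real and runs the Burnside fusion argument of Lemma \ref{real} on an order-$p$ element of $\bfZ(P)$ (and for $n<p$ the normalizer is all of $\SSS_n$). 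Your diagonal element of order $p-1$ at the bottom of the wreath tower does normalize $P$, so that route also works.

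There is, however, a genuine gap in your part (i). You write the inverting permutation $\sigma$ as a product of $k\lfloor p/2\rfloor$ transpositions and, when this number is odd, correct the sign by a transposition of two fixed points, asserting that ``since $n\geq p+2$ there are at least two fixed points of $t$.'' This fails once $k\geq 3$: the number of fixed points is $n-kp$, and $n\geq p+2$ gives no lower bound on it. For instance $t=(1\,2\,3)(4\,5\,6)(7\,8\,9)\in\AAA_9$ with $p=3$ has $k=3$, $k\lfloor p/2\rfloor=3$ odd, and no fixed points, so your sign correction is unavailable. The repair is exactly the paper's observation: with $a$ fixed points and $b$ cycles of length $p$, the hypothesis $n\geq p+2$ forces $a\geq 2$ or $b\geq 2$; in the first case $\bfC_{\SSS_n}(t)$ contains a transposition, and in the second it contains the swap of two $p$-cycles, a product of $p$ transpositions, hence odd since $p$ is odd. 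Either way $\bfC_{\SSS_n}(t)\not\leq\AAA_n$, so the $\SSS_n$-class of $t$ does not split and $t$ is real in $\AAA_n$. Finally, concerning the ``in particular'' clause: the printed $Q\in\Syl_2(S)$ is a typo for $Q\in\Syl_p(S)$ --- that is how the lemma is invoked in Propositions \ref{slu} and \ref{ortho}, and the paper's proof reads ``arguing as in the proof of Lemma \ref{real}.'' The intended deduction is: take $z$ of order $p$ in $\bfZ(Q)$ with $Q\in\Syl_p(S)$; by the first part $z$ is real in $S$, hence real in $\bfN_S(Q)$ by Burnside's fusion lemma, and since $z\neq z^{-1}$ an element inverting $z$ has even order. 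Your excursion into $2$-locals proves the literal misprinted statement, which is trivially true but not what the rest of the paper uses.
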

 
\begin{proof}
(i) Write $t$ as a product of $b$ disjoint cycles. Then $t$ has $a := n-bp$ fixed points. As $n \geq p+2$, we have that $a \geq 2$ or $b \geq 2$.
 Hence $t$ is centralized by an odd permutation, and so it is real in $S$. Arguing as in the proof of Lemma \ref{real}, we see that
 $|\bfN_S(Q)|$ is even.
 
 (ii) The statement is obvious if $n < p$. If $n \geq p$, then any element (of order $p$) in $\SSS_n$ is real, so we can argue as in the proof
 of Lemma \ref{real}.
 
 (iii) Applying (i) to the element $z$ obtained in Lemma \ref{real} we see that $n = p$ or $p+1$.
 In particular, the Sylow $p$-subgroups of $S$ are cyclic. (In fact, one can 
also show that $p \equiv 3 (\mod 4)$ in this case, but we do not need this 
stronger conclusion.)
 \end{proof}

\begin{lem}\label{spor}
Theorem \ref{main:simple} holds true if $S$ is any of the $26$ sporadic simple groups.
\end{lem}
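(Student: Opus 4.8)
The plan is to verify Lemma~\ref{spor} by a finite, essentially computational check over the 26 sporadic groups, using the criterion extracted from Lemma~\ref{real}: if the hypotheses of Theorem~\ref{main:simple} hold for an almost simple group with sporadic socle $S$, then $S$ must contain a non-real element of order $p$ that is $p$-central in $G$; equivalently, working inside $S$ (and at most its outer automorphism group, which has order $1$ or $2$), the element $z$ of order $p$ lying in $\Omega_1(\bfZ(P))$ for $P \in \Syl_p(S\langle\sigma\rangle)$ must be non-real, and moreover $|\bfN_G(P)|$ must be odd. So the first step is to list, for each sporadic $S$ and each prime $p \mid |S|$, the conjugacy classes of elements of order $p$ in $S$ and whether they are real, reading this off the character table (an element is real iff every $\chi\in\Irr(S)$ takes a real value on it) or directly from the ATLAS, where real classes are flagged.

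Second, for each $(S,p)$ for which $S$ has non-cyclic Sylow $p$-subgroups (the cyclic-Sylow cases are already permitted by the conclusion of Theorem~\ref{main:simple} and can be set aside immediately), I would check whether a $p$-central element of order $p$ can be non-real. For the vast majority of pairs this fails outright: either all order-$p$ elements of $S$ are real (as happens for $p=2$ always, and for many odd primes), or the unique class in $\Omega_1(\bfZ(P))$ is real. The only pairs $(S,p)$ that survive this test are the sporadic groups with non-cyclic, ``small'' Sylow $p$-subgroups at a prime where non-real order-$p$ classes exist --- realistically $p=3$ (and perhaps $p=5$) in a handful of groups such as $J_3$, $McL$, $Co_2$, $Co_3$, $Suz$, $Ru$, $O'N$, $Fi_{22}$, etc.; I would tabulate exactly these. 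The ATLAS power maps and the explicit structure of $\bfN_S(P)$ (the $p$-local subgroups of sporadic groups are completely documented) then let me decide the final condition: whether $|\bfN_G(P)|$ is odd. In every surviving case I expect $\bfN_S(P)$ to contain an involution (indeed $\bfN_S(P)/P\bfC_S(P)$ embeds in $\Aut(P/\Phi(P))$ and is typically even, or $\bfC_S(z)$ already contains an involution inverting some other part of $P$), giving the desired contradiction; one must also rule out that passing to $S\langle\sigma\rangle$ with $\sigma$ an outer automorphism of order $2$ could make the normalizer odd, which it cannot since then $|G/S|$ is even and $\bfN_G(P)$ meets the nontrivial coset in an element of $2$-power order contributing an involution after a suitable power.

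The third ingredient, needed only if some pair $(S,p)$ cannot be killed purely by the real/non-real dichotomy, is a direct computation of $\bfN_G(P)$ for $P\in\Syl_p(G)$: since $P$ is a $p$-group and $|P|$ is small for odd $p$ in sporadic groups, $\bfN_S(P) = \bfN_S(P)$ can be located inside a known maximal $p$-local subgroup, and its order's parity read off. I would present this as a short table rather than case-by-case prose, citing the ATLAS and, where convenient, \texttt{GAP}'s character table library for the reality and power-map data.

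The main obstacle I anticipate is not conceptual but organizational: the sheer number of pairs $(S,p)$ means the argument lives or dies on having a clean uniform reduction. The cleanest reduction is: (a) if $\Syl_p(S)$ is cyclic, done; (b) otherwise, if every element of order $p$ in $S$ is real, contradiction with Lemma~\ref{real}; (c) otherwise, in the finitely many remaining pairs, check that the specific $p$-central class is real or that $\bfN_G(P)$ has even order. Making sure step (b) really does dispatch almost everything --- i.e.\ that non-real elements of order $p$ in sporadic groups are genuinely rare --- is the point that needs care, and it is exactly here that one leans on the detailed ATLAS data; I would expect only a short explicit list to remain for step (c), each entry of which is settled by a one-line normalizer-order check.
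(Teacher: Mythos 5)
Your proposal is correct and is essentially the paper's argument: a finite check over the $26$ sporadic groups showing that whenever the Sylow $p$-subgroup is non-cyclic the Sylow normalizer has even order (the paper simply observes that $G=S$ because $p>2$ and $|\Out(S)|\le 2$, and then cites Wilson's computations \cite{W} of Sylow normalizers in sporadic groups). Your preliminary filter via Lemma~\ref{real} and the reality of order-$p$ classes is a sensible organizational device but is not needed, and the case $G=S\langle\sigma\rangle$ with $\sigma$ an outer involution that you take pains to exclude cannot arise in the first place, since $G=SP$ forces $G/S$ to be a $p$-group for the odd prime $p$.
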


\begin{proof}
Since $p > 2$, $G = S$ in this case. Now if $P$ is non-cyclic, then one can check using \cite{W} that 
$|\bfN_G(P)|$ is even. Otherwise $P$ is cyclic as desired.
\end{proof} 
 
In what follows, we use the notation $SL^\eps$ to denote $SL$ when $\eps = +$ and $SU$ when $\eps = -$, and similarly
for $GL^\eps$, $PSL^\eps$, etc. We also use $E_6^\eps$ to denote $E_6$ when $\eps = +$ and $\tw2 E_6$ when $\eps = -$.
 
\begin{pro}\label{slu}
Theorem \ref{main:simple} holds if $S = PSL^\eps_n(q)$ with $n \geq 3$, $\eps = \pm$, and $p \nmid q$.
\end{pro}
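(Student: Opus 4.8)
The plan is to dispose of $S = \PSL^\eps_n(q)$ with $n \geq 3$ and $p \nmid q$ by combining the severe restriction on the type already obtained in Corollary~\ref{simple1}(i) with a direct analysis of the structure of Sylow $p$-subgroups of finite classical groups in non-defining characteristic. By Corollary~\ref{simple1}(i), such an $S$ must be of type $A_n$ or $\tw2 A_n$, so indeed $S = \PSL^\eps_n(q)$ with $n \geq 2$; we are given $n \geq 3$. Let $d = d_\eps(q,p)$ be the multiplicative order of $\eps q$ modulo $p$ (so $d$ is the order of $q$ mod $p$ when $\eps = +$, and the order of $-q$ mod $p$ when $\eps = -$), and write $p^a \| q^d - 1$. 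A Sylow $p$-subgroup of $\GL^\eps_n(q)$ is, by the standard Weir/Carter--Fong description, a direct product of wreath products of cyclic groups $C_{p^a}$ by symmetric groups $\SSS_{p^i}$, built according to the base-$p$ digits of $\lfloor n/d \rfloor$; it is cyclic precisely when $d \leq n < 2d$ (equivalently $\lfloor n/d\rfloor = 1$), and noncyclic otherwise. So the real content is: \emph{if $\lfloor n/d \rfloor \geq 2$, then $|\bfN_G(P)|$ is even}, which by Lemma~\ref{real} we may detect by exhibiting a real nontrivial $p$-central element of $G$, i.e.\ a real element $z \in \Omega_1(\bfZ(P)) \cap S$.

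**First I would** reduce from $G$ to $\GL^\eps_n(q)$: it suffices to find an element of order $p$ in $\SL^\eps_n(q)$ (landing in $S$ after passing to the quotient, since $p \nmid \gcd(n, q-\eps)$ is not automatic but the center is a $p'$-group away from a few small cases which can be checked by hand) that is $p$-central in $G$ and inverted by some element of $G$. The natural candidate $z$ is a ``diagonal'' element supported on the first $d$ coordinates: inside a Singer-type torus of $\GL^\eps_d(q) \leq \GL^\eps_n(q)$ one finds a cyclic $p$-group of order $p^a$, and taking a generator of its $\Omega_1$ gives an element $z$ of order $p$ whose centralizer in $\GL^\eps_n(q)$ contains $\GL^\eps_{n-d}(q)$ as a direct factor; when $n \geq 2d$ this factor is nontrivial and genuinely enlarges the centralizer, which is exactly what forces $P$ (and hence $\bfZ(P) \ni z$) to be noncyclic. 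For the reality of $z$: the element $z$ generates a subgroup of a Coxeter (or more generally a cyclic maximal) torus $T$ of $\GL^\eps_d(q)$, and the relevant Weyl group element inverts $T$, so $z$ is inverted in $\GL^\eps_d(q)$ and a fortiori in $G$; alternatively one quotes \cite[Proposition 3.1]{TZ} on reality of semisimple elements in $\SL_n$ and $\SU_n$, being careful about the (mild) obstructions there. Either way, $z$ is real and $p$-central, contradicting Lemma~\ref{real}, so $\lfloor n/d\rfloor = 1$ and $P$ is cyclic.

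**The main obstacle** I expect is not the generic argument but the bookkeeping at the boundary and in small cases: when $d = n$ the Sylow $p$-subgroup of $\GL^\eps_n(q)$ is already cyclic (this is the desired conclusion), but one must still check that this cyclicity survives the passage to $\SL^\eps_n(q)$ and then to $S = \PSL^\eps_n(q)$ — i.e.\ that no $p$-torsion is created or the quotient does not shear the cyclic group — which requires the observation that $p \nmid \gcd(n, q-\eps)$ in the relevant range, or an explicit check when it fails. Likewise one must handle the outer part $G/S$, a $p$-group by hypothesis ($G = SP$ with $G$ almost simple of socle $S$ and $p$ odd forces $G/S$ to be a $p$-group contained in the cyclic group of diagonal-times-field automorphisms), and verify that adjoining these does not destroy the reality of $z$ nor make $\bfN_G(P)$ of odd order in the cyclic case; here the point is that field and diagonal automorphisms of odd $p$-power order still centralize or normalize the torus containing $z$ compatibly. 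I would organize the write-up as: (1) record the Weir description of $P$ and the criterion ``$P$ cyclic $\iff \lfloor n/d\rfloor = 1$''; (2) in the noncyclic case construct $z$ and prove it is real and $p$-central, then invoke Lemma~\ref{real}; (3) clean up the cyclic case and the center/outer-automorphism issues, noting the finitely many small $(n,q,p)$ to be checked directly (e.g.\ via \cite{W} or \cite[Proposition 3.1]{TZ}).
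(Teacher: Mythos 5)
Your central mechanism --- exhibit a real nontrivial $p$-central element of $S$ and contradict Lemma~\ref{real} --- fails for precisely the groups at hand, and this is not a bookkeeping issue but the heart of the matter. The reason $\PSL^\eps_n(q)$ survives Corollary~\ref{simple1}(i) is that \cite[Proposition 3.1]{TZ} does \emph{not} cover types $A_n$ and $\tw2 A_n$: semisimple elements of $\SL^\eps_n(q)$ are in general not real, so your fallback of ``quoting [TZ, Prop.\ 3.1]'' is exactly the tool that is unavailable here. Concretely, in the generic noncyclic case (say $p\mid q-\eps$ and $n\geq p+2$) the nontrivial elements of $\Omega_1(\bfZ(P))\cap S$ have the form $\diag(\omega,\dots,\omega,1,\dots,1)$ with $\omega$ of order $p$ occurring with some multiplicity $k$, $0<k<n$; since such an element has $\omega$-eigenspace of dimension $k$ while its inverse has $\omega$-eigenspace zero, it is not conjugate to its inverse even in $GL^\eps_n(q)$. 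Your Singer-torus candidate fares no better: the normalizer of the cyclic maximal torus of $GL^\eps_d(q)$ induces only the cyclic group generated by $x\mapsto x^{\eps q}$ on it, which contains inversion only when $d$ is even; and once the Weyl part $R_W$ of $P$ is nontrivial it permutes the blocks $V_i$, so an element supported on $V_1$ alone is not $p$-central in the first place. Thus both the reality and the $p$-centrality of your $z$ break down in general.

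The paper's proof therefore runs on a different engine in the noncyclic case: it constructs an explicit involution $t\in \SL^\eps_n(q)\smallsetminus\bfZ(GL^\eps_n(q))$ (a monomial matrix, or an element of $\bfN_{\SSS_m}(R_W)$ supplied by Lemma~\ref{alt}) that is invariant under the field-automorphism part $\sigma_0$, normalizes $R_T$ and $R_W$, and centralizes $P/Q$; such a $t$ produces a $P$-fixed involution in $\bfN_S(Q)/Q$, whence $2\mid|\bfN_G(P)|$ directly. Most of the case division is devoted to arranging $\det t=1$ and $\sigma_0$-invariance. The real-$p$-central-element trick is used only in the single subcase $n=p$, $p\mid(q-\eps)$, where $\bfZ(P)\cap Q$ contains $y=\diag(\omega,\omega^2,\dots,\omega^{p-1},1)$, whose eigenvalue multiset \emph{is} stable under inversion, and an explicit inverting element of determinant $1$ can be written down. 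Your write-up would need to be rebuilt around a direct construction of an even-order $P$-fixed element of $\bfN_S(Q)/Q$ rather than around reality of $p$-central elements.
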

 
\begin{proof}
(a) We view $S = L/\bfZ(L)$, where $L := SL^\eps_n(q) \nor H:= GL(V) \cong GL^\eps_n(q)$, and $V = \FF_q^n$ for 
$\eps = +$, $V = \FF_{q^2}^n$ for $\eps = -$. Fix a basis $(e_1, \ldots ,e_n)$ of $V$, which is orthonormal if $\eps = -$.
If $q = r^f$ for a prime $r$, then we can use this basis to define a field automorphism $\sigma$ of $H$, $L$, and $S$, which sends  a
matrix $(x_{ij}) \in H$ (written in the chosen basis) to $(x_{ij}^r)$, and let $\sigma_0$ denote the $p$-part of $\sigma$. 

We will use the description of a Sylow $p$-subgroup $R$ of $H$ as given in \cite[\S2]{Hall} using the results of
\cite[Chap. 4, \S10]{GL} and \cite[Chap. 4, \S4.10]{GLS}. In particular, $R = R_T \rtimes R_W$, with
$R_T$ (the ``toral part'' of $R$) being homocyclic abelian. Furthermore,
there is a direct sum (orthogonal if $\eps = -$) decomposition of $V$ (compatible with the chosen basis of $V$) as $R_T$-module:
$$V = V_0 \perp V_1 \perp  \ldots \perp V_m,$$
with $V_i \cong V_1$ for $1\leq i \leq m$ of dimension 
$e := \ord_p(\eps q)$,  and $0 \leq \dim V_0 < e$. Next, $R_T = R_1 \times \ldots \times R_m$
with $R_i$ a cyclic subgroup of a cyclic maximal torus $T_i$ of 
$GL^\eps(V_i)$ and $|T_i| = q^e-\eps^e$. Furthermore, there is a subgroup $\Sigma$ of $\bfN_H(R_T)$
with $\Sigma$ isomorphic to the symmetric group $\SSS_m$ acting naturally on the sets
$\{V_1,\ldots,V_m\}$, and $\{R_1, \ldots, R_m\}$, and
with $R_W$ (the ``Weyl part'' of $R$) being a Sylow $p$-subgroup of $\Sigma$. 

Since $\sigma_0$ normalizes $GL^\eps(V_1)$, we can embed $\sigma_0$ in a Sylow $p$-subgroup
$\tilde R_1$ of $GL^\eps(V_1) \rtimes \langle \sigma_0 \rangle$, and then take $R_1 = \tilde R_1 \cap GL^\eps(V_1)$ to ensure $R_1$ to be $\sigma_0$-stable. 
The subgroups $R_i$ are then constructed using the (fixed) isomorphism $V_i \cong V_1$ for $1 \leq i \leq m$. 
By its construction, $R$ is $\sigma_0$-stable. Then $LR$ is $\sigma_0$-stable and $p \nmid |H/LR|$. 
Since $p > 2$ and $G=SP \leq \Aut(S)$, by a suitable conjugation in $\Aut(S)$ we may assume that 
$G \leq H^*/\bfZ(H^*)$, where $H^* := LR \rtimes \langle \sigma_0 \rangle$ and 
$P \leq P^*/(P^* \cap \bfZ(H^*))$, where $P^* = R \rtimes \langle \sigma_0 \rangle$. Set $Q^* := L \cap P^*$.
 
\smallskip
(b) Here we consider the case $p|(q-\eps)$, whence $e = 1$ and $m = n$. Let $V_i$, $1 \leq i \leq m$, be spanned by a vector $e_i$
in such a way that $\Sigma$ acts on $\{e_1, e_2, \ldots ,e_m \}$.

\smallskip
(b1) Consider the case $n \geq p+2$. Then, by Lemma \ref{alt}(i), we can
find an involution $t \in \bfN_{\AAA_n}(R_W)$. By the choice, we have that $t \in L \smallsetminus \bfZ(H)$; furthermore,
$t$ normalizes $R_W$, $R_T$, $R$, and commutes with $\sigma_0$. Next, if $x \in R_1 < R_T$, then 
$txt^{-1}x^{-1} \in L \cap R_T \leq Q^*$. Since $P^* = Q^*R_1\langle \sigma_0\rangle$,
it follows that $t$ centralizes $P^*/Q^*$. We have shown that
$$tQ^* \in \bfC_{\bfN_L(Q^*)/Q^*}(P^*/Q^*).$$ 
Since $Q = Q^*/(Q^* \cap \bfZ(H^*)) \in \Syl_p(S)$, we get
$$tQ \in \bfC_{\bfN_S(Q)/Q}(P/Q) = \bfN_S(P)/Q.$$
Thus we have shown that $|\bfN_S(P)|$ is even, a contradiction. 

\smallskip
(b2) Next assume that $n=p+1$. Then we can choose $R_W$ to be generated by the $p$-cycle 
permuting $e_1, \ldots ,e_p$ cyclically and fixing $e_{p+1}$. By choosing $t(e_{p+1}) = \pm e_{p+1}$ suitably, 
we again get a $\sigma_0$-invariant monomial matrix $t \in \bfN_{L}(R_W) \smallsetminus \bfZ(H)$
of order $2$, and then repeat the arguments in (b1) to see that $|\bfN_S(P)|$ is even. 

If $3 \leq n < p$, then $R_W = 1$. Choosing $t = \diag(-1,-1, 1, \ldots 1)$ if $2 \nmid q$ and 
$$t:e_1 \leftrightarrow e_2,~e_i \mapsto e_i,~3 \leq i \leq n$$
if $2|q$, we then get a $\sigma_0$-invariant monomial
involution $t \in \bfN_L(R_W) \smallsetminus \bfZ(H)$ and finish as above.
 
\smallskip
(b3) It remains to consider the case $n=p$. Fix $\omega \in \FF_{q^2}^\times$ of order $p$ and consider
$y := \diag(\omega,\omega^2, \ldots,\omega^{p-1},1)$. We can also choose $R_W$ to be generated by the $p$-cycle 
permuting $e_1, \ldots ,e_p$ cyclically. Then one can check that $y \in \bfZ(Q)$. Note that $p|(q-\eps)$ implies 
that $\omega^{\sigma_0} = \omega$ and so $[y,\sigma_0] = 1$. Also, $y$ commutes with $T_1$. Since 
$P^* = Q^*R_1\langle \sigma_0\rangle$, we see that $y \in \bfZ(P) \cap Q$. On the other hand, $y$ is inverted by
the element
$$v: e_i \leftrightarrow e_{p-i},~1 \leq i \leq p-1,~e_p \mapsto (-1)^{(p-1)/2}e_p$$
of $L$, contradicting Lemma \ref{real}.

\smallskip
(c) From now on we may assume that $p \nmid q(q-\eps)$; in particular, $R < L$ and $e > 1$. 
Arguing as in (b), we see that it suffices to produce a $\sigma_0$-invariant involution
$t \in L \smallsetminus \bfZ(L)$ that induces an element in $\bfN_{\SSS_m}(R_W)$ and normalizes $R_T$.
Also note that Sylow $p$-subgroups of $S$ are cyclic if $m = 1$, so we may assume that $m \geq 2$. 
Now, if $2|q$, then $\bfN_{\Sigma}(R_W)$ is contained in $L$ (and $\sigma_0$-fixed pointwise), and so
we are done by Lemma \ref{alt}(ii). Similarly, if $2|e$, then any transposition in $\Sigma$ 
``flips'' two $e$-dimensional subspaces $V_i$ and $V_j$ and so has determinant $1$, whence we are again done by 
Lemma \ref{alt}(ii).

So we may assume $2 \nmid qe$. If $m \geq p+2$, then we can apply Lemma \ref{alt}(i). If $4 \leq m < p$,
then we can take $t \in \Sigma$ that represents $(12)(34)$ under $\Sigma \cong \SSS_m$. If $p > m = 2,3$, 
then take $s\in \Sigma$ that represents $(12)$ under $\Sigma \cong \SSS_m$ and set $t := (-1_{V_1})s$.
Finally, assume that $m = p$ or $p+1$. In this case, we may assume that $R_W$ is generated by the element
represented by the $p$-cycle $(1,2,\ldots,p)$ in $\SSS_m$, and choose an involution $s \in \bfN_{\Sigma}(R_W)$;
in particular, $s$ acts trivially on $V_{p+1}$ if $m > p$.
Then we can take $t = su^j$ with $u = -1_{V_1 \oplus V_2 \oplus \ldots \oplus V_p}$  and $j \in \{0,1\}$ chosen suitably.
 \end{proof}
 
\begin{pro}\label{ortho}
Theorem \ref{main:simple} holds if $S = P\Omega^\eps_{2n}(q)$ with $2 \nmid n \geq 5$, $\eps = \pm$, and $p \nmid q$.
\end{pro}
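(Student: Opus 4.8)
The plan is to mimic the structure of the proof of Proposition~\ref{slu}, replacing the monomial/permutation combinatorics of $GL^\eps_n$ by the corresponding combinatorics inside the orthogonal group $\mathrm{GO}^\eps_{2n}(q)$, and at each stage either produce a $\sigma_0$-invariant involution $t \in \Omega \smallsetminus \bfZ(\Omega)$ that centralizes $P/Q$ (contradicting that $|\bfN_S(P)|$ is odd via Lemma~\ref{real}), or else conclude that $P \in \Syl_p(S)$ is cyclic. By Corollary~\ref{simple1}(i) we may already assume $S$ is of type $D_n$ or $\tw2 D_n$ with $2 \nmid n \geq 5$, so $p \nmid q$, $2n \geq 10$ and $n$ odd. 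Write $e := \ord_p(\eps_0 q)$ for the appropriate sign $\eps_0$ (the order of $q$ modulo $p$, possibly twisted), and use the standard description (as in \cite[\S2]{Hall}, \cite[Chap.~4]{GL}, \cite[Chap.~4]{GLS}) of a Sylow $p$-subgroup $R = R_T \rtimes R_W$ of $H := \mathrm{GO}^\eps_{2n}(q)$: there is an orthogonal decomposition $V = V_0 \perp V_1 \perp \cdots \perp V_m$ with the $V_i$ ($1 \leq i \leq m$) pairwise isometric nondegenerate spaces of dimension $2e$ (or $e$, according as $p | q^e-1$ comes from a torus of order $q^e-1$ or $q^e+1$) on which $R_T = R_1 \times \cdots \times R_m$ acts cyclically, $V_0$ the ``defect'' space carrying the leftover torus/cyclic part, and $R_W$ a Sylow $p$-subgroup of a group $\Sigma \cong \SSS_m$ permuting the $V_i$. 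Exactly as in Proposition~\ref{slu}(a), choose everything to be $\sigma_0$-stable (where $\sigma_0$ is the $p$-part of the field automorphism), pass to $H^* = \Omega R \rtimes \langle \sigma_0 \rangle$, and set $Q^* = \Omega \cap P^*$, so that $Q = Q^*/(Q^* \cap \bfZ)$ is a Sylow $p$-subgroup of $S$.

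The combinatorial heart is then a case analysis on $m$ and on the parities of $e$ and $q$, producing the required involution inside $\Omega^\eps_{2n}(q)$ rather than $SL^\eps_n$. First, if $m = 1$ then $P$ is cyclic and we are done, so assume $m \geq 2$; and if $m \geq p+2$ we get an involution in $\bfN_{\AAA_m}(R_W) \leq \Sigma$ by Lemma~\ref{alt}(i), which lifts to a $\sigma_0$-invariant element of $\Omega$ since an even permutation of the isometric blocks $V_i$ has spinor norm and determinant trivial; similarly if $2 \mid e$ then a transposition flipping $V_i, V_j$ already lies in $\Omega$. The genuinely delicate subcases are $2 \nmid qe$ with $m$ small ($m = 2,3$, then $4 \le m < p$, then $m = p$ or $p+1$) and the degenerate situations where $e \nmid n$ forces a nontrivial defect space $V_0$ of dimension $\dim V_0 \in \{0, 2, \ldots\}$ — here one needs $t$ to act on $V_0$ as well, and since $n$ is odd and $2n \equiv \dim V_0 \pmod{2e}$, the defect is genuinely present in many cases. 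The recipe will be, as in (b1)–(c) of Proposition~\ref{slu}: take a permutation $s \in \bfN_\Sigma(R_W)$ realizing $(12)(34)$ or $(12)$ in $\SSS_m$, then multiply by a sign element $u = -1_{V_{i_1} \perp \cdots \perp V_{i_k}}$ on an even-dimensional sum of blocks (so $u \in \Omega$, using that $-1$ on a $2e$-dimensional space has determinant $1$ and — for $q$ odd — trivial spinor norm because the dimension is even), and if necessary absorb $V_0$ by using $-1_{V_0}$ or a reflection-pair inside $V_0$, choosing the exponents so that the product is $\sigma_0$-fixed, lies in $\Omega$, and is not in $\bfZ(\Omega) = \{\pm 1_V\}$ (for which $m \ge 2$ or $\dim V_0 > 0$ leaves enough room to avoid $\pm 1_V$).

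The step I expect to be the main obstacle is the careful spinor-norm bookkeeping for $q$ odd: an element of $\mathrm{GO}^\eps_{2n}(q)$ lies in $\Omega^\eps_{2n}(q)$ only if both its determinant and its spinor norm vanish, and while permutations of isometric blocks and the element $-1$ on an even-dimensional nondegenerate subspace are easy to control, combining these with the action on the defect space $V_0$ — whose isometry type depends on $\eps$, $e$, and $n \bmod 2e$ — requires tracking $\eps$ through $D_n$ versus $\tw2 D_n$ and through each subcase. A secondary technical point is verifying that the chosen $t$ centralizes $P^*/Q^*$: one must check $[t, R_i] \leq \Omega \cap R_T \leq Q^*$ and $[t,\sigma_0] = 1$, which is automatic once $t$ is built from $\sigma_0$-invariant block permutations and block-wise $\pm 1$'s, but needs to be stated. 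I would also handle separately, if they arise, the small-rank coincidences and the possibility that the relevant cyclic torus has order $q^e+1$ rather than $q^e-1$ (the ``$\eps = -$'' flavor of the toral part), where the block dimension and the structure of $\bfN_\Sigma(R_W)$ inside $\Omega$ must be re-examined; but the argument is uniform in spirit with Proposition~\ref{slu}, and in every branch we either exhibit the forbidden involution or land in the cyclic-Sylow conclusion.
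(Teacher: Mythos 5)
Your outline correctly identifies the framework (the description $R = R_T \rtimes R_W$ of a Sylow $p$-subgroup, the block decomposition of $V$, and the strategy of exhibiting a $\sigma_0$-invariant involution in $L \smallsetminus \bfZ(L)$ normalizing $R_T$ and $R_W$), but the two places you flag as ``the main obstacle'' are exactly where the proof lives, and your plan for them either stops short or rests on a step transplanted from Proposition \ref{slu} that does not survive the passage from $SL^\eps_n$ to $\Omega^\eps_{2n}$. The claim that a transposition flipping two blocks ``already lies in $\Omega$ when $2\mid e$'' is reasoning with the wrong invariant: in the orthogonal group every flip of two isometric even-dimensional blocks has determinant $1$ automatically, and the obstruction to membership in $\Omega(V)$ is the spinor norm. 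One must compute it: writing the flip as $\prod_{i=1}^d \rho_{e_i-f_i}$, its spinor norm is $\prod_i Q(e_i-f_i) = 2^d\prod_i Q(e_i)$, and whether this is a square is controlled by whether $-1_{V_1} \in \Omega(V_1)$, which in turn requires first reducing to $V_0=0$, deducing $\eps=\eps_p$ and $d \equiv 2n \pmod 4$, and invoking \cite[Proposition 2.5.13(ii)]{KL}; for $2\mid q$ one uses instead the totally singular subspace criterion of \cite[Lemma 2.5.8]{KL}, and the case $-1_V\notin\Omega(V)$ has to be absorbed by multiplying by $-1_V$. Your appeal to ``even permutations of isometric blocks have trivial spinor norm'' does handle $m\geq 2p$, but the decisive residual case $m=p$ involves a product of $(p-1)/2$ flips, an odd number whenever $p\equiv 3\pmod 4$, so the single-flip computation cannot be avoided.

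Your treatment of $V_0\neq 0$ (building $t$ from $-1_{V_0}$ or a reflection pair inside $V_0$) is both harder than necessary and genuinely problematic: for $2\mid q$ and $\dim V_0=2$ of minus type, $\Omega(V_0)\cong C_{q+1}$ contains no involution at all, and for $2\nmid q$ the spinor norm of $-1_{V_0}$ is the discriminant of $V_0$, which need not be a square. The paper's proof avoids all of this with an idea your proposal is missing: if $V_0\neq 0$ (and likewise if $p\nmid m$ with $m>1$), then a suitable $p$-central element $z$ as in Lemma \ref{real} already lies in a naturally embedded $\Omega^{\pm}_{2n-2}(q) < L$; since $n-1$ is even, every semisimple element of that subgroup is real by \cite[Proposition 3.1]{TZ}, so $z$ is real in $L$, contradicting Lemma \ref{real} with no construction whatsoever. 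This is in fact where the hypothesis $2\nmid n$ enters. As written, your argument is an announced plan whose decisive verifications are deferred, and one of its stated reductions is based on the determinant where the spinor norm is the true obstruction.
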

 
\begin{proof} 
(a) We view $S = L/\bfZ(L)$, where $L = \Omega(V) \cong \Omega^\eps_{2n}(q)$ and $V = \FF_q^{2n}$ is a quadratic space
of type $\eps$, and let $H = GO(V)$. Also define $\eps_p = +$ if $e=\ord_p(q)$ is odd and $\eps_p = -$ if $2|e$, and
let $d := {\mathrm {lcm}}(2,e)$. We again use the description of $R = R_T \rtimes R_W \in \Syl_p(H)$ as in part (a) of
the proof of Proposition \ref{slu}. The only differences are that $V_i \cong V_1$ for $1 \leq i \leq m$ is a quadratic space of 
dimension $d$ and type $\eps_p$, either $\dim V_0 < d$ or $\dim V_0 = d$ but $V_0$ has type $-\eps_p$, and 
$|T_1| = q^{d/2}-\eps_p$. 

We will choose a basis compatible with the decomposition $V = V_0 \perp V_1 \perp \ldots \perp V_m$
and the isomorphisms $V_i \cong V_1$ for $1 \leq i \leq m$, and use this basis to define the field automorphism 
$\sigma: (x_{ij}) \mapsto (x_{ij}^r)$ if $r$ is the prime dividing $q$. Let $\sigma_0$ denote the $p$-part of $\sigma$. As 
in the proof of Proposition \ref{slu}, we can construct $R$ in such a way that all $R_i$ and $R$ are $\sigma_0$-stable.  
As $p > 2$, we may identify $Q \in \Syl_p(S)$ with $R$. Furthermore, since $n \geq 5$
and $G=SP$, conjugating suitably in $\Aut(S)$, we may assume that $P \leq P^*:= R \rtimes \langle \sigma_0 \rangle$. 

\smallskip
(b) First we consider the case $V_0 \neq 0$. Since $2|d = \dim V_1$, we then have that $\dim V_0 \geq 2$. It follows 
that $R < \Omega^{\pm}_{2n-2}(q) < L$. Since $2|(n-1)$, the element $z$ obtained in Lemma \ref{real} is real in 
$\Omega^{\pm}_{2n-2}(q)$ by \cite[Proposition 3.1]{TZ} and so in $L$ as well, a contradiction.

Next suppose that $p \nmid m$ but $m > 1$. Then we can choose $R_W \in \Syl_p(\Sigma)$ 
to be contained in $\SSS_{m-1}$ under $\Sigma \cong \SSS_m$. As the $p$-group $\langle \sigma_0\rangle$ acts on 
$\Omega_1(R_m) \neq 1$, we can take $1 \neq z \in \Omega_1(R_m)$ 
to be $\sigma_0$-fixed, and then note that $z \in \bfZ(P) \cap R$. On the other hand, \cite[Proposition 3.1]{TZ} 
and the oddness of $n$ again imply that $z$ is real in $\Omega^\pm_{2n-2}(q) < L$, contradicting Lemma \ref{real}. 
 
\smallskip
(c) If $m  = 1$ then Sylow $p$-subgroups of $S$ are cyclic. From now on we may therefore assume that 
$V_0 = 0$, $m \geq 2$, and $p|m$. It suffices to produce a $\sigma_0$-invariant involution
$t \in L \smallsetminus \bfZ(L)$ that induces an element in $\bfN_{\SSS_m}(R_W)$ and normalizes $R_T$. Indeed, 
such an element would belong to $\bfN_L(P)$ and so $|\bfN_S(P)|$ would be even.

If $m \geq 2p$, then by Lemma \ref{alt}(i), we can choose
an involution $t \in \bfN_{\AAA_m}(R_W)$ which is then $\sigma_0$-fixed and belongs to $L \smallsetminus \bfZ(L)$,
and so we are done. 

So we may assume that $m = p$ and so $R_W \cong C_p$. We can find an involution $s \in \bfN_\Sigma(R_W)$ which is 
represented by a disjoint product of $(p-1)/2$ transpositions $s_i \in \SSS_p$, $1 \leq i \leq (p-1)/2$. Note that each $s_i$ flips two 
$d$-dimensional subspaces $V_k$ and $V_{k'}$ (and fixes all remaining $V_l$ pointwise), so $\det {s_i} = 1$ and $s \in SO(V)$. 
%If $p \equiv 1 (\mod 4)$, then 
%$|\bfN_{\AAA_p}(C_p)| = p(p-1)/2$ is even, and we are done again. 

\smallskip
(d) Here we assume that $2 \nmid q$. Since all $V_i$ have type $\eps_p$ and $V$ has type $\eps$, we get $\eps = \eps_p$.
Next, $n = dm/2$ is odd, so $d \equiv 2n (\mod 4)$. If 
$-1_V \notin \Omega(V)$, then we can take $t = (-1_V)^js$ for a suitable $j \in \{0,1\}$ to get $t \in L \smallsetminus \bfZ(L)$ as desired.

Assume $-1_V \in \Omega(V)$. Then $\eps = (-1)^{n(q-1)/2}$ by \cite[Proposition 2.5.13(ii)]{KL}, and so $-1_{V_i} \in \Omega(V_i)$ 
since $d \equiv 2n (\mod 4)$ and $\eps = \eps_p$.  Consider for instance a flip $\tau:V_1 \leftrightarrow V_2$ in $\Sigma$ that sends 
an orthogonal basis $(e_1, \ldots ,e_d)$ of $V_1$ to a basis $(f_1, \ldots ,f_d)$ of $V_2$. Let $Q$ denote the quadratic form on $V$ and
let $\rho_v$ denote the reflection corresponding to any non-singular $v \in V$. As $-1_{V_i} = \prod^{d}_{i=1}\rho_{e_i} \in \Omega(V_i)$,
we have that $\prod^d_{i=1}Q(e_i) \in \FF_q^{\times 2}$. Now $\tau = \prod^d_{i=1}\rho_{e_i-f_i}$ and 
$$\prod^d_{i=1}Q(e_i-f_i) = \prod^d_{i=1}(Q(e_i)+Q(f_i)) = 2^d\prod^d_{i=1}Q(e_i) \in \FF_q^{\times 2}.$$
We have shown that $\tau \in \Omega(V)$. As a consequence, $s_i \in \Omega(V) = L$ for all $i$, and so we can just set 
$t := s = \prod_is_i$.

\smallskip
(e) Finally, let $2|q$. Again, we consider for a flip $\tau:V_1 \leftrightarrow V_2$ in $\Sigma$ that sends 
a basis $(e_1, \ldots ,e_d)$ of $V_1$ to a basis $(f_1, \ldots ,f_d)$ of $V_2$. Then $\tau$ fixes the maximal totally singular subspace 
$M := \langle e_1+f_1, \ldots ,e_d+f_d \rangle_{\FF_q}$ of $V_1 \oplus V_2$. It follows by \cite[Lemma 2.5.8]{KL} that 
$\tau \in \Omega(V_1 \oplus V_2) \leq L$. Hence, $s_i \in L$ for all $i$, and we are done by taking $t := s = \prod_is_i$. 
\end{proof}
 
The following simple argument is useful in various situations:

\begin{lem}\label{coprime}
Let $M= NP$ be a finite group with a normal subgroup $N$, $p$ be a prime, and $P \in \Syl_p(M)$. Let $A$ be a subgroup of $N$
that contains $Q := P \cap N$ as a normal subgroup. Suppose that $M$ acts transitively on the set of $N$-conjugates of $A$. 
Then, for any prime $r$, some Sylow $r$-subgroup $X$ of $\bfN_N(A)/Q$ is fixed by a Sylow $p$-subgroup of $M$ containing $Q$.
\end{lem}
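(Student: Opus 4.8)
The plan is to exploit the transitivity of the $M$-action on the set $\Omega$ of $N$-conjugates of $A$ together with a Frattini-type argument. First I would observe that since $P$ normalizes $N$ and $M = NP$, the group $P$ permutes $\Omega$, and by hypothesis $N$ (hence $M$) is transitive on $\Omega$. Fix the point $A \in \Omega$ and let $M_A = \bfN_M(A)$, $N_A = \bfN_N(A)$ be the stabilizers; by transitivity $M = N M_A$, and since $P \leq M$ we get $P \cap N \cdot (\text{something})$... more precisely, a standard argument gives that some $M$-conjugate of $P$ lies in $M_A$: indeed $|\Omega| = |M:M_A| = |N:N_A|$ divides $|N|$ and $N \nor M$, so $|\Omega|$ is coprime to $|P|/|P\cap N| \cdot$ (index issues) — rather than chase indices, the cleanest route is: $Q = P \cap N$ normalizes $A$ by hypothesis, so $Q \leq N_A \leq M_A$; now $M_A$ acts on $\Omega$ too but I really want a Sylow $p$-subgroup of $M$ inside $M_A$. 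Since $M = NM_A$ and $N_A = N \cap M_A$, we have $|M:M_A| = |N:N_A|$ is a $p'$-number? Not necessarily. So instead: let $P_0$ be a Sylow $p$-subgroup of $M_A$ containing $Q$; then $|M_A|_p = |M_A : N_A|_p \cdot |N_A|_p$, and since $|M:M_A| = |N:N_A|$ and $|M:N|$ is a $p'$-number (as $P \leq N M_A$ forces... ) — the key numerical point is that $M/N$ is a $p$-group quotient image, so $|M|_p = |N|_p \cdot |M:N|_p$; combined with $|M:M_A|=|N:N_A|$ one checks $|M_A|_p = |M|_p$, i.e. $P_0 \in \Syl_p(M)$.

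With $P_0 \in \Syl_p(M)$ and $Q \leq P_0 \leq M_A = \bfN_M(A)$, the group $P_0$ acts by conjugation on $\bfN_N(A)$ and fixes the normal subgroup $Q$, hence acts on the quotient $\bfN_N(A)/Q$. Now I would apply the coprime-action fixed-point theorem for Sylow subgroups: the group $P_0/(P_0 \cap N) = P_0 Q / Q$ is a $p$-group acting on $\bfN_N(A)/Q$ — but I must be careful, $P_0 \cap N$ need not equal $Q$. The right statement is that $P_0$ itself acts on $\bfN_N(A)/Q$; this action need not be coprime. So the correct tool is not coprime action but rather the classical fact that a $p$-group acting on a finite group stabilizes some Sylow $r$-subgroup for $r \neq p$ — precisely, by a theorem going back to Glauberman (or simply: the action of $P_0$ permutes the Sylow $r$-subgroups of $\bfN_N(A)/Q$, their number is $\equiv 1 \pmod r$, but that gives a fixed point only when $p \neq r$ and the orbit-count argument applies since $P_0$ is a $p$-group and every orbit on a set of size coprime to $p$... wait, the number of Sylow $r$-subgroups is coprime to $r$ but not necessarily to $p$). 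The clean fix: pass to $\bar N := \bfN_N(A)/Q$ and note $M$ normalizes... Actually since the number of Sylow $r$-subgroups of $\bar N$ is $\equiv 1 \pmod r$, if it is coprime to $p$ then the $p$-group $P_0$ has a fixed point by orbit counting mod $p$. If $p$ divides that number, one instead uses that $P_0$ normalizes a Sylow $p$-subgroup... hmm. The genuinely correct statement is: a $p$-group $P_0$ acting on a finite group $\bar N$ always normalizes some Sylow $r$-subgroup of $\bar N$ for every prime $r$ — this is because $\bar N \rtimes P_0$ is a finite group in which $P_0$ is a $p$-subgroup, and by the Frattini argument applied to a Sylow $r$-subgroup $X$ of $\bar N \nor \bar N\rtimes P_0$, $\bar N \rtimes P_0 = \bar N \cdot \bfN(X)$, so some conjugate of $P_0$ lies in $\bfN(X)$, i.e. $P_0$ normalizes some conjugate of $X$; that conjugate is again a Sylow $r$-subgroup of $\bar N$. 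That is the argument I would write.

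So the final write-up is: (1) produce $P_0 \in \Syl_p(M)$ with $Q \leq P_0 \leq \bfN_M(A)$ using transitivity and the index equality $|M:\bfN_M(A)| = |N:\bfN_N(A)|$ together with $M = NP$; (2) note $P_0$ acts on $\bfN_N(A)/Q$ since it normalizes both $\bfN_N(A)$ and $Q$; (3) form the semidirect product $\big(\bfN_N(A)/Q\big)\rtimes P_0$, apply the Frattini argument to a Sylow $r$-subgroup $X$ of the normal subgroup $\bfN_N(A)/Q$ to find a conjugate of $P_0$ normalizing $X$, equivalently $P_0$ normalizes some Sylow $r$-subgroup $X$ of $\bfN_N(A)/Q$; this $P_0$ is the desired Sylow $p$-subgroup of $M$ containing $Q$.

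The main obstacle I anticipate is step (1): carefully checking that the Sylow $p$-subgroup $P_0$ of $\bfN_M(A)$ that contains $Q$ is actually a full Sylow $p$-subgroup of $M$. This uses that $|M:N|$ is a power of $p$ (since $M = NP$), the orbit-stabilizer equality $|M:\bfN_M(A)| = |N:\bfN_N(A)|$ (a consequence of $M$-transitivity on $\Omega$ together with $N$-transitivity), and a short index computation showing $|\bfN_M(A)|_p = |M|_p$. Everything after that is a standard Frattini argument and should be quick.
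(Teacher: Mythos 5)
Your step (1) --- producing $P_0 \in \Syl_p(M)$ with $Q \leq P_0 \leq \bfN_M(A)$ from the factorization $M = N\,\bfN_M(A)$ --- is exactly the paper's reduction, and step (2) is fine. The problem is step (3). The ``classical fact'' you ultimately invoke, that a $p$-group acting on a finite group $\bar N$ always normalizes some Sylow $r$-subgroup of $\bar N$ for every prime $r$, is false. Counterexample: $\bar N = \SSS_3$, $p=3$, $r=2$, with $P_0 = C_3$ acting by conjugation via a $3$-cycle; the three Sylow $2$-subgroups are permuted cyclically, so none is normalized. Your Frattini derivation of the ``fact'' breaks at the inference ``$\bar N \rtimes P_0 = \bar N \cdot \bfN(X)$, so some conjugate of $P_0$ lies in $\bfN(X)$'': Frattini only shows that $\bfN(X)$ maps onto the quotient $(\bar N \rtimes P_0)/\bar N \cong P_0$, not that it contains a conjugate of $P_0$. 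That stronger conclusion is valid precisely when $P_0$ is a full Sylow $p$-subgroup of $\bar N \rtimes P_0$, i.e.\ when $p \nmid |\bar N|$ --- and in the $\SSS_3$ example the order-$3$ complement to $\bar N$ inside $\bfN(X)$ is not conjugate to $P_0$.

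What you are missing is the one observation that rescues the orbit-counting argument you first considered and then abandoned: since $M = NP$, the subgroup $Q = P \cap N$ is a full Sylow $p$-subgroup of $N$, hence of $\bfN_N(A)$ (as $Q \leq A \leq \bfN_N(A)$). Therefore $\bfN_N(A)/Q$ is a $p'$-group, the action is coprime after all, and the set of its Sylow $r$-subgroups has cardinality dividing $|\bfN_N(A)/Q|$, hence prime to $p$; a $p$-group acting on a set of $p'$-size has a fixed point. This coprimality remark is essentially the entirety of the paper's second step. With it inserted, your write-up becomes correct and coincides with the paper's proof; without it, step (3) as written does not go through.
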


\begin{proof}
Note that $Q \in \Syl_p(A)$ is normal in $A$  and so
$Q \nor \bfN_N(A)$. By assumption, $NP=M=\bfN_M(A)N$. Hence, without any loss, we may replace $(N,M)$ with
$(\bfN_M(A),\bfN_N(A))$ and assume that $A \lhd M$. Now the $p$-group $P$ acts on the set of Sylow $r$-subgroups of 
$N/Q$ which has $p'$-size, and so it must have a fixed point.  
\end{proof} 
 
\begin{pro}\label{e6}
Theorem \ref{main:simple} holds if $S$ is a simple group of type $E^\eps_6(q)$ with $\eps = \pm$ and $p \nmid q$.
\end{pro}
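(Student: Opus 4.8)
# Proof Proposal for Proposition \ref{e6}

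The plan is to mimic closely the strategy used in Propositions \ref{slu} and \ref{ortho}: realize a Sylow $p$-subgroup of $\Aut(S)$ concretely, locate a $\sigma_0$-invariant involution in (a suitable cover of) $S$ that normalizes the toral and Weyl parts of $R$, and thereby force $|\bfN_S(P)|$ to be even unless the Sylow $p$-subgroups of $S$ are cyclic. First I would invoke Corollary \ref{simple1}(i), which already restricts us to the case at hand (type $E_6^\eps$, $p \nmid q$) and in particular tells us $p$ is an odd prime dividing $|S|$; moreover by Lemma \ref{real} there is a non-real $p$-central element $z \in S \cap P$ of order $p$. Passing to the simply connected group $\GC_{\mathrm{sc}} = E_6(q)_{\mathrm{sc}}$ (or its $\eps=-$ analogue), write $e = \ord_p(\eps q)$ and describe $R \in \Syl_p(\GC_{\mathrm{sc}})$ as $R = R_T \rtimes R_W$ via the Gager/Weir--Carter--Fong description cited in \cite{Hall}, \cite{GL}, \cite{GLS}: here $R_T$ sits inside a torus of the form $\prod_i T_i$ with $|T_i| = \Phi_{e}$-part of $|\GC|$ arranged along a product of $e$-dimensional tori, and $R_W$ is a Sylow $p$-subgroup of the relevant subgroup $\Sigma$ of the Weyl group $W(E_6)$ permuting these torus factors.

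The key case analysis is by the value of $e = \ord_p(\eps q) \in \{1,2,3,4,5,6,8,9,12\}$ (the set of degrees of cyclotomic polynomials dividing $|E_6^\eps|$). When $e \geq 7$ (so $e \in \{8,9,12\}$), a Sylow $p$-subgroup is cyclic (the relevant cyclotomic polynomial divides $|E_6^\eps|$ to the first power) and we are done; the same holds whenever $\Phi_e$ divides $|S|$ exactly once, which covers $e = 5, 6$ as well and also $e=4$ for $\eps=+$. For the remaining small values of $e$ — essentially $e \in \{1,2\}$, and $e=3$ (resp. $e=6$) giving the $\Phi_3^3$ (resp. $\Phi_6^3$) situations where $R_W \neq 1$ — I would, exactly as in parts (b) and (c) of Proposition \ref{slu}, produce a $\sigma_0$-invariant involution $t$ that normalizes $R_T$ and induces on the torus factors an element of $\bfN_\Sigma(R_W)$, using the structure of $W(E_6)$: $W(E_6)$ contains $-1$ on the $\mathrm{SL}_6$ or $\mathrm{SL}_3^3$-type subsystems, plenty of sign-change and permutation involutions (e.g. the longest element of the $A_5 A_1$ or $A_2^3$ subsystem), and — crucially — the center of $E_6(q)_{\mathrm{sc}}$ has order $\gcd(3, q-\eps)$, which is coprime to $2$, so the image of any involution $t \in \GC_{\mathrm{sc}}$ in $S$ is again a genuine involution. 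As in the earlier propositions, $[t, R_1] \leq R_T \cap L \leq Q$ forces $tQ$ to centralize $P/Q$, hence $tQ \in \bfN_S(P)/Q$, contradicting the oddness of $|\bfN_G(P)|$. For the cases where $R_T$ meets $\bfZ(\GC_{\mathrm{sc}})$ or where there is a leftover small torus factor $T_0$ (the analogue of $V_0 \neq 0$), I would instead apply Lemma \ref{real} directly: locate a $\sigma_0$-fixed element of order $p$ in $\Omega_1(R_T)$ lying in a proper reductive subgroup (of type $A_5 A_1$, $D_5$, or $A_2^3$) in which all semisimple elements are real by \cite[Proposition 3.1]{TZ}, contradicting non-reality of $z$.

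The main obstacle I anticipate is bookkeeping the torus structure and the action of $R_W$ precisely enough to guarantee that the constructed involution $t$ lies in the derived group $\GC_{\mathrm{sc}}$ (equivalently, that the relevant element of $\bfN_\Sigma(R_W)$ lifts to an honest element of the Weyl group acting trivially on $\GC_{\mathrm{sc}}/\GC_{\mathrm{sc}}'$ — which here is automatic — but more subtly that sign-change parts land in the simply connected group, the analogue of the determinant and spinor-norm computations in parts (d),(e) of Proposition \ref{ortho}). Since $\bfZ(E_6(q)_{\mathrm{sc}})$ has order prime to $2$, I expect this to be genuinely easier than in the orthogonal case: any $2$-element of $\GC_{\mathrm{sc}}$ projects injectively to $S$, so it suffices to build $t$ inside $\GC_{\mathrm{sc}}$, and the detailed field-of-definition arguments of Proposition \ref{ortho} should collapse. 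A secondary nuisance is handling $e=3$ and $e=6$ where $\Phi_e^3 \mid |E_6^\eps|$ and $R_W$ is a nontrivial $p$-group (for $p=3$): there one mirrors part (b3) of Proposition \ref{slu}, taking a diagonal-torus element $y \in \bfZ(P) \cap Q$ inverted by a Weyl-group involution $v \in W(A_2^3) \leq W(E_6)$, again landing in $\GC_{\mathrm{sc}}$ because the center has odd order. I would also need to double-check the small-rank exceptional isogeny issues (e.g. $p = 3$ dividing $|\bfZ(\GC_{\mathrm{sc}})|$) but these affect only the passage $\GC_{\mathrm{sc}} \to S$ by an odd-order quotient and so do not interfere with the involution argument.
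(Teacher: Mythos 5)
Your overall strategy---either produce a $2$-element of $\bfN_S(Q)$ suitably compatible with $P$, or exhibit a real $p$-central element contradicting Lemma~\ref{real}---is the same as the paper's, and your observation that $|\bfZ(E_6^\eps(q)_{\mathrm{sc}})|=\gcd(3,q-\eps)$ is odd (so involutions pass faithfully to $S$) is correct. But the execution has concrete gaps. First, your case division by $e$ is wrong: $\Phi_4^2$ and $\Phi_6^2$ divide $|E_6(q)|$ (and $\Phi_6^3$ divides $|\tw2 E_6(q)|$), so for $e\in\{4,6\}$ the Sylow $p$-subgroups are homocyclic of rank at least $2$, not cyclic; $e=6$ is simultaneously declared cyclic and listed among the ``remaining'' cases, and $e=4$ drops out of your analysis altogether. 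Second, and more seriously, the reality argument you propose for the leftover cases is self-defeating: you want to place $z$ in a subsystem subgroup ``of type $A_5A_1$, $D_5$, or $A_2^3$ in which all semisimple elements are real by \cite[Proposition 3.1]{TZ}'', but that proposition says precisely that types $A_{n\ge 2}$, $D_{2n+1}$ and $E_6$ are the ones in which semisimple elements need \emph{not} be real---so none of the three subgroups you name has the property you need. The paper instead embeds $Q$ in $F_4(q)$ (not a subsystem subgroup, hence easy to overlook), which contains a full Sylow $p$-subgroup of $S$ whenever $p\nmid (q^5-\eps)(q^9-\eps)$ and in which all semisimple elements \emph{are} real; this single step disposes of every $e$ outside $\{1,3\}$ (up to the cyclic cases).

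Third, the decomposition $R=R_T\rtimes R_W$ with $\Sigma\cong \SSS_m$ permuting torus factors, and with it Lemma~\ref{alt}, is a feature of the classical groups; for $E_6$ the relative Weyl groups of the $\Phi_e$-tori are complex reflection groups (for instance $3^{1+2}.SL_2(3)$ when $e=3$), so your appeals to $\bfN_{\AAA_m}(R_W)$ have no analogue here. The paper handles $p\mid q^2+\eps q+1$, $p\nmid q-\eps$ via the unique class of maximal subgroups $T.3^{1+2}.SL_2(3)$ together with Lemma~\ref{coprime}, extracting a $P$-fixed $Q_8$ and hence a $P$-fixed involution in $\bfN_S(Q)/Q$; and it handles $p\mid q-\eps$ with $p\ge 5$ by placing $z$ inside the $A_1$-factor of the $A_1A_5$ subgroup, where reality does hold. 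Finally, the genuinely delicate case $p=3\mid (q-\eps)$ is untouched by your sketch: there the paper needs the precise structure $\bfN_L(\hat Q)=\hat Q\rtimes C_2^2$ from \cite[Table 1]{MN} combined with $W(E_6)=SU_4(2)\rtimes C_2$ to produce a $P$-fixed involution in $\bfN_S(Q)/Q$, whereas your proposed ``diagonal torus element inverted by a Weyl involution'' is neither shown to lie in $\bfZ(P)\cap Q$ (the Sylow $3$-subgroup of $W(E_6)$ has order $81$ and acts on $R_T$ with small fixed space) nor shown to be inverted by an element of $L$ that survives the relevant normalizer conditions.
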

 
\begin{proof} 
(i) We can view $S$ as $[H,H]$ where $H := E^\eps_6(q)_{{\mathrm {ad}}}$. If $p \nmid (q^5-\eps)(q^9-\eps)$, then we 
can embed $Q := P \cap S$ in a subgroup $X \cong F_4(q)$ of $S$ and conclude that the element $z$ obtained in Lemma \ref{real} is real in 
$X$ by \cite[Proposition 3.1]{TZ}, a contradiction. On the other hand, if $p|(q^5-\eps)(q^9-\eps)$ but $p \nmid (q^3-\eps)$,
then Sylow $p$-subgroups of $S$ are cyclic. So we may assume that $p|(q^3-\eps)$.

Suppose that $p|(q^3-\eps)$ but $p \nmid (q-\eps)$; in particular, $(q,\eps) \neq (2,-)$. By the main result of \cite{LSS}
(see Table 5.2 therein), $H$ has a unique conjugacy class of maximal subgroups $A$ 
(of maximal rank) of type $T \cdot 3^{1+2} \cdot SL_2(3)$, where $T$ is a maximal torus of order 
$(q^2+\eps q +1)^3$. Now we can view $Q$ as $\bfO_p(T)$ and apply Lemma \ref{coprime} to $(M,N,A) = (GH,H,A)$ to 
conclude that $P$ fixes a Sylow $2$-subgroup $B \cong Q_8$ of $A/Q$ 
(note that $GH$ is a group as $G \leq \Aut(S)$ normalizes $H$ and 
that $\bfN_H(A) = A=\bfN_H(Q)$ by maximality of $A$). It then follows that $P$ fixes the subgroup $[B,B] \cong C_2$ 
which is contained in $\bfN_S(Q)/Q$. Thus $P$ fixes an involution in $\bfN_S(Q)/Q$ and so $|\bfN_G(P)|$ is even, a contradiction. 

Assume now that $p|(q-\eps)$ but $p \neq 3$ (whence $p \geq 5$ and $q \geq 4$). By the main result of \cite{LSS}
(see Table 5.1 therein), $H$ has a unique conjugacy class of maximal subgroups $A$ 
(of maximal rank) of type $C_d \cdot (PSL_2(q) \times PSL^\eps_6(q)) \cdot C_{de}$, where 
$d = \gcd(2,q-1)$ and $e = \gcd(3,q-\eps)$. By the Frattini argument, we may assume that $P$ normalizes $A$. Next, the 
assumption on $p$ implies that we can view $Q$ as a Sylow $p$-subgroup of $A$ and so contained 
in $C_d \cdot (PSL_2(q) \times PSL^\eps_6(q))$. As $P$ normalizes the component $A_1:= C_d \cdot PSL_2(q)$ of 
$[A,A]$, we may choose the element $z$ in Lemma \ref{real} to be contained in $A_1$. But then $z$ is real in $A_1$,
again a contradiction.

\smallskip
(ii) Finally, we consider the case $p = 3|(q-\eps)$. View $S = L/Z$ for $L = E_6^\eps(q)_{\mathrm {sc}} = \cG^F$ 
and $Z = \bfZ(L)$, where $\cG$ is a simple, simply connected algebraic group of type $E_6$ over $\bar{\FF}_q$ and 
$F:\cG \to \cG$ a Frobenius endomorphism. According to \cite[Theorem 25.11]{MT}, there is a unique $L$-conjugacy 
class of maximal tori $\cT$ in $\cG$ such that $T := \cT^F$ has order $(q-\eps)^6$, and 
$\bfN_L(\cT)/T = W(E_6)$ by \cite[Proposition 25.3]{MT}. Next, we can view $Q = \hat{Q}/Z$ for 
some $\hat{Q} \in \Syl_3(L)$.
By the Frattini argument, we may assume that $P$ normalizes $A:= \bfN_L(\cT)$ and $T = \sol(A)$; moreover, 
$\hat{Q} > \bfO_3(T)$ and $\bar{Q} := \hat{Q}T/T \in \Syl_3(W(E_6))$. Recall that $W(E_6) = SU_4(2) \rtimes C_2$.
Since $P$ normalizes $[A/T,A/T] \cong SU_4(2)$ and $\bar{Q}$, 
we see that $P$ normalizes the subgroups $C > B > T$ of $A$, where 
$$B/T = \bfN_{SU_4(2)}(\bar{Q}) = \bar{Q} \rtimes C_2,~~C/T = \bfN_{W(E_6)}(\bar{Q}) = \bar{Q} \rtimes C_2^2.$$
Setting $O := \bfO_{3'}(T)$, observe that $\bfN_{B}(\hat{Q})O/O = \bfN_{B/O}(\hat{Q}O/O)$ 
and so the previous equalities imply that 
$$2|\hat{Q}| \mbox{ divides }|\bfN_B(\hat{Q})| = |\bfN_C(\hat{Q})|/2.$$ 
According to \cite[Table 1]{MN}, $\bfN_L(\hat{Q}) = \hat{Q} \rtimes C_2^2$. As 
$\bfN_{C}(\hat{Q}) \leq \bfN_L(\hat{Q})$, we conclude that $\bfN_B(\hat{Q}) = \hat{Q} \rtimes C_2$, with the 
quotient $\bfN_B(\hat{Q})/\hat{Q} \cong C_2$ fixed by $P$. Since $\bfN_S(Q) = \bfN_L(\hat{Q})/Z$, we have therefore shown that 
$\bfN_S(Q)/Q$ contains a $P$-fixed involution. Thus $|\bfN_G(P)|$ is even, again a contradiction.
\end{proof} 
 
\begin{pro}\label{defi}
Theorem \ref{main:simple} holds if $S$ is a simple group of Lie type in characteristic $p$.
\end{pro}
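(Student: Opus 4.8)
The plan is to exploit Corollary~\ref{simple1}(ii) to cut down the list of possible $S$ drastically. Recall that we are now in the defining-characteristic case, so $S$ is a simple group of Lie type over $\FF_q$ with $q=p^f$, and $G=SP$ with $P\in\Syl_p(G)$ and $|\bfN_G(P)|$ odd. By Lemma~\ref{real}, $P\cap S$ contains a non-real $p$-central element of order $p$. Corollary~\ref{simple1}(ii) then tells us that either $q\equiv 3\pmod 4$, or $p\in\{3,5\}$ and $S$ is exceptional (and $S$ is never of type $D_4$, $\tw2D_4$, $\tw3D_4$). My first step is to dispose of the case $q\equiv 3\pmod 4$: here $S$ should be forced to be $\PSL_2(q)$. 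The key point is that if $S$ has $\FF_q$-rank at least $2$ (or more precisely, if the natural module / root system is large enough), then a Sylow $p$-subgroup of $S$ — which is (conjugate to) the group of $\FF_q$-points of the unipotent radical of a Borel — contains unipotent elements that are centralized by a non-central involution of $S$ coming from a reflection in the Weyl group, or from $-1$ in a rank-one Levi $\SL_2(q)$ or $\PGL_2(q)$; such an involution normalizes a suitably chosen Sylow $p$-subgroup, forcing $|\bfN_S(P)|$ even. Concretely, pick a long root subgroup $U_\alpha\le U$ (a root element is $p$-central since it lies in $\bfZ$ of a Sylow $p$-subgroup in the rank-one situation, but in higher rank one must choose $P$ and the root element compatibly); then the reflection $n_\alpha$ normalizes $\langle U_\alpha, U_{-\alpha}\rangle\cong\SL_2(q)$ or $\PGL_2(q)$, and a suitable involution in the normalizer of a Sylow $p$-subgroup of that $\SL_2$ (which exists iff the relevant torus $C_{q-1}$ or $C_{q+1}$ has even order, true since $q$ is odd) normalizes $U_\alpha$ and centralizes the root element. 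Arranging that this involution lies in $S\smallsetminus\bfZ$ and normalizes an actual Sylow $p$-subgroup of $S$ is the technical work; it should go through whenever the rank is at least $2$, leaving exactly $S=\PSL_2(q)$ with $q\equiv 3\pmod 4$, which is the desired conclusion.

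The second step handles the remaining cases $p\in\{3,5\}$ with $S$ exceptional and $q\equiv 1\pmod 4$ (the $q\equiv 3$ subcase already having $\PSL_2$ excluded since $S$ is exceptional — wait, that would be a contradiction, so in fact for exceptional $S$ we must have $q\equiv 1\pmod 4$ and $p\in\{3,5\}$). Here I would run the same root-element argument: an exceptional group always has rank at least $2$ and always contains a fundamental $\SL_2(q)$ or $\PGL_2(q)$, so the same construction of a non-central involution normalizing a Sylow $p$-subgroup applies and yields $|\bfN_S(P)|$ even, a contradiction. For the small Suzuki/Ree groups $\tw2B_2(q)$, $\tw2G_2(q)$, $\tw2F_4(q)$ — which occur only for $p=2,3,2$ respectively, so only $\tw2G_2(q)$ with $p=3$ survives — one checks directly (the structure of Sylow $3$-normalizers in $\tw2G_2(q)$ is known) that the normalizer is even. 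Finally $\tw2F_4(2)'$ and other small exceptions are handled by direct inspection.

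The main obstacle, and where the real care is needed, is the bookkeeping that makes the involution genuinely lie outside the center and genuinely normalize a \emph{Sylow} $p$-subgroup of $S$ (not merely some unipotent subgroup), uniformly across types, including twisted types where the relevant "root $\SL_2$" may instead be $\SL_2(q^e)$ or $\SU_3(q)$ and one must track field automorphisms as in the proofs of Propositions~\ref{slu} and~\ref{ortho}. A clean way to organize this is: realize $S=L/\bfZ(L)$ with $L$ the simply connected group, write $P\cap S$ via a Borel $B=UT$ with $U$ the unipotent radical, choose a fundamental root $\alpha$, let $L_\alpha=\langle U_\alpha,U_{-\alpha}\rangle$ be the corresponding (possibly twisted) rank-one subgroup, pick in $\bfN_{L_\alpha}(U_\alpha)$ an involution $t$ (available because the maximal torus of $L_\alpha$ has even order as $q$ is odd), check $t\notin\bfZ(L)$ using that the rank is $\ge 2$, verify $t$ normalizes $U$ (it does, since $t$ normalizes $L_\alpha$ and centralizes a complement), and commutes with the appropriate field-automorphism part $\sigma_0$ — then conclude exactly as in case (b1) of Proposition~\ref{slu}. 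The low-rank and exceptional-isogeny corner cases ($A_1$, $\tw2A_2$, the very small $q$, and the Suzuki/Ree families) are the ones that must be examined one at a time, and that is where I expect the bulk of the remaining effort to lie.
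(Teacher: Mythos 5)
Your overall strategy---produce a non-central, field-automorphism-invariant involution normalizing a Sylow $p$-subgroup of $S$ to contradict the oddness of $|\bfN_G(P)|$---is the same as the paper's, but you are missing the one structural fact that makes the argument short and uniform, and as a result you defer exactly the case analysis that this fact eliminates. In defining characteristic, $\bfN_L(Q) = \cB^F = QT$ for $L$ the simply connected cover, where $T = \cT^F$ is the maximally split torus; hence $\bfN_L(Q)/Q \cong T$ and $\bfN_S(Q)/Q \cong T/(QZ/Q)$ with $QZ/Q \cong Z = \bfZ(L)$. So there is no need to hunt for involutions inside root $\SL_2$'s, verify that they normalize all of $U$, or treat twisted rank-one subgroups ($\SU_3$, $\SL_2(q^e)$, \ldots) separately: \emph{every} $\sigma_0$-fixed $2$-element of $T$ already lies in $\bfC_{\bfN_L(Q)/Q}(P)$, and one only has to compare the $2$-rank $a$ of $T^{\sigma}=\cT^{F'}$ (again a torus of a finite reductive group, so its $2$-rank is known for all types, twisted or not) with the $2$-rank $b$ of $Z$. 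One gets $a>b$ in every case except type $D_n$ (where $a\geq 3 > 2 \geq b$ anyway) and type $A_1$, plus the observation $(a,b)=(1,0)$ for $\SU_3$ and $\tw2G_2$; only $\SL_2(q)$ survives, and then Corollary~\ref{simple1}(ii) forces $q\equiv 3\pmod 4$. Your proposal, by contrast, leaves precisely the twisted types, the low-rank cases, and the interaction with $\sigma_0$ as ``the bulk of the remaining effort,'' which is where a root-subgroup argument genuinely requires type-by-type care; as written this is a sketch, not a proof.

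Two smaller points. First, your parenthetical deduction that exceptional $S$ forces $q\equiv 1\pmod 4$ is a non sequitur: the two alternatives in Corollary~\ref{simple1}(ii) are not mutually exclusive, so an exceptional group with $q\equiv 3\pmod 4$ must still be ruled out (your rank-$\geq 2$ argument would cover it, but the logic as stated is wrong). Second, you should justify the reduction to $G\leq L\rtimes\langle\sigma\rangle$ with $\sigma$ a field automorphism: this uses that $S$ is not of type $D_4$, $\tw2D_4$, $\tw3D_4$ (Corollary~\ref{simple1}(ii)), so that the field automorphisms contain a Sylow $p$-subgroup of $\Out(S)$; otherwise triality for $p=3$ would obstruct the setup. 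Also, the condition that your involution ``centralizes the root element'' is irrelevant to the argument---what you need is that its image in $\bfN_S(Q)/Q$ is fixed by $P/Q$, i.e.\ that it is $\sigma_0$-invariant modulo $Q$, which for $h_\alpha(-1)$ holds because its entries lie in the prime field.
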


\begin{proof}
As $p > 2$, we can view $S = L/Z$ for $L =  \cG^F$ and $Z = \bfZ(L)$, where $\cG$ is a simple, simply connected algebraic group 
over $\bar{\FF}_q$ and $F:\cG \to \cG$ a Steinberg endomorphism. 
Since $p \nmid |Z|$, we can identify $Q:= P \cap S$ with
a Sylow $p$-subgroup of $L$. It is well known that $\bfN_L(Q) = QT = \cB^F$, where $T = \cT^F$, 
$\cT$ is an $F$-stable maximal torus of $\cG$ contained in an $F$-stable Borel subgroup $\cB$ of 
$\cG$, see e.g. \cite[\S2.3]{GLS}. As $\cG$ is not of type $D_4$ by Corollary \ref{simple1} and $p > 2$, it follows that the subgroup 
of field automorphisms in $\Out(S)$ contains a Sylow $p$-subgroup of $\Out(S)$. Hence, we may assume
(using the uniqueness of $(\cB,\cT)$ up to conjugacy) that $G \leq H:= L \rtimes \langle \sigma \rangle$ for a suitable field 
automorphism $\sigma$ and $\sigma$ acts on $T$. Any $2$-element in $T^\sigma$ will then belong to $\bfC_{\bfN_L(Q)/Q}(P)$;
also, $\bfN_S(Q)/Q = (\bfN_L(Q)/Q)/(QZ/Q)$ and $QZ/Q \cong Z$. 
Hence, $|\bfN_G(P)|$ is even whenever the $2$-rank $a$ of $T^\sigma$ is larger than the $2$-rank $b$ of  $Z$. 

Note that $T^\sigma = \cT^{F'}$ for a suitable Frobenius endomorphism $F'$ of $\cG$. Now, if $\cG$ is of type $D_n$ with $n \geq 4$,
then $a \geq 3$ and $b \leq 2$. In all other cases $Z$ is cyclic (see eg. \cite[Theorem 2.5.12]{GLS})
and so $b \leq 1$. So we only need to consider the cases where
$a \leq b \leq 1$, whence $L$ is of type $SL_2$ (note that $(a,b) = (1,0)$ for $L$ of type $SU_3$ or $\tw2 G_2$). Finally, 
when $L = SL_2(q)$ we must have $q \equiv 3 (\mod 4)$ by Corollary \ref{simple1}(ii).
\end{proof} 
 
Theorem \ref{main:simple} now follows from Corollary \ref{simple1},  Lemmas \ref{alt}, \ref{spor}, and Propositions \ref{slu}, \ref{ortho}, \ref{e6},
and \ref{defi}.
\hfill $\Box$

\section{Odd Sylow Normalizers and Character Tables}

In this section we prove several reflections (of elementary nature)
of the existence of odd Sylow normalizers, such as Theorem C,
in the character table.

\begin{thm}\label{ker}
Let $G$ be a finite group and let $p$ be a prime, let  $P \in \syl pG$, and assume that
$|\norm GP|$ is odd. If $\chi \in \irrp G$ is real-valued, then $N \leq \ker\chi$ for every
solvable $N \nor G$.
\end{thm}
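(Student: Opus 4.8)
The plan is to induct on $|G|$ and reduce the assertion to a statement about a single minimal normal subgroup. I would first dispose of $p=2$: then $P \le \norm GP$ forces $|P|$ to be odd, so $P=1$, $|G|$ is odd, and by Burnside's theorem $\chi = 1_G$, so there is nothing to prove. Hence assume $p$ is odd. Now let $N \nor G$ be solvable; if $N=1$ we are done, and otherwise pick a minimal normal subgroup $M$ of $G$ with $M \le N$, which (being characteristically simple and solvable) is an elementary abelian $r$-group for some prime $r$. If I can prove $M \le \ker\chi$, then $\chi$ inflates from some $\tilde\chi \in \Irr(G/M)$, the quotient $G/M$ inherits the hypotheses (note $\norm{G/M}{PM/M} = \norm GP M/M$ is a quotient of $\norm GP$, hence of odd order), $\tilde\chi$ is real-valued of $p'$-degree, and $N/M \nor G/M$ is solvable, so induction yields that $N/M$ lies in the kernel of $\tilde\chi$, and therefore $N \le \ker\chi$. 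Thus it suffices to show $M \le \ker\chi$ for $M$ an elementary abelian $r$-group which is minimal normal in $G$.

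For that, I would apply Clifford's theorem: $\chi_M = e(\lam_1 + \cdots + \lam_t)$, where $\{\lam_1, \ldots, \lam_t\}$ is the $G$-orbit of a linear character $\lam := \lam_1 \in \Irr(M)$, $t = |G : I_G(\lam)|$, and $\chi(1)=et$. Since $p \nmid \chi(1)$ we get $p \nmid t$, so the $p$-group $P$ fixes some member of the orbit; after replacing $\lam$ by it we may assume $P \le H := I_G(\lam)$, so $P \in \syl pH$. It then suffices to prove $\lam = 1_M$, as then $\chi_M = e\cdot 1_M$ and $M \le \ker\chi$.

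Suppose first that $r$ is odd. Real-valuedness of $\chi$ gives $\overline{\chi_M} = \chi_M$, whence $\{\overline{\lam_i}\} = \{\lam_i\}$ and so $\overline\lam = \lam^{-1}$ lies in the $G$-orbit of $\lam$; fix $g \in G$ with $\lam^g = \lam^{-1}$. As $I_G(\lam^{-1}) = I_G(\lam)$, we get $H^g = I_G(\lam^g) = I_G(\lam^{-1}) = H$, so $g \in \norm GH$; and $\lam^{g^2} = (\lam^{-1})^g = (\lam^g)^{-1} = \lam$, so $g^2 \in H$. Since $g$ normalises $H$, $P^g \in \syl pH$, so by Sylow's theorem $(P^g)^h = P$ for some $h \in H$; put $y := gh \in \norm GP$. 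Then $\lam^y = (\lam^{-1})^h = \lam^{-1}$ (because $h \in H = I_G(\lam)$), so $y$ inverts $\lam$. But $\norm GP$ has odd order, so $y$ has odd order $m$; inducting on the exponent gives $\lam = \lam^{y^m} = \lam^{(-1)^m} = \lam^{-1}$, hence $\lam^2 = 1_M$, and since $\lam$ has odd order, $\lam = 1_M$, as wanted.

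The case $r=2$ is the one I expect to require a separate idea, because then every linear character of $M$ is automatically real-valued and the real-valuedness of $\chi$ carries no information; instead I would exploit the oddness of $\norm GP$ in a different way. Here $\cent MP = M \cap \cent GP \le \norm GP$ has odd order while being a $2$-group, so $\cent MP = 1$; by Brauer's permutation lemma applied to the action of $P$ on $M$ (the number of $P$-fixed characters of $M$ equals the number of $P$-fixed elements of $M$), the set of $P$-invariant characters of $M$ is trivial. But $\lam$ is $P$-invariant, so $\lam = 1_M$. In either case $\lam = 1_M$, which completes the reduction and hence the proof. The two places where the hypothesis on $\norm GP$ is genuinely used are: to force $p$ odd (via Burnside), and to force $y$ to have odd order (in the odd-$r$ case) or $\cent MP$ to be trivial (in the case $r=2$).
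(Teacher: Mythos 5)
Your proof is correct, but it takes a genuinely different route from the paper's. The paper does not induct or pass to a minimal normal subgroup: it works with the full solvable $N$ at once, takes a $P$-invariant constituent $\theta$ of $\chi_N$, uses the same Frattini-plus-odd-order trick that you use (an element of the odd-order group $\norm GP$ sends $\theta$ to $\bar\theta$, and its square already fixes $\theta$, forcing $\theta=\bar\theta$), then invokes two external results: a lemma of Navarro--Sp\"ath giving a real extension $\eta$ of $\theta$ to $NP$, and Theorem A of Isaacs--Malle--Navarro, which says that a solvable group with odd Sylow $p$-normalizer has no nontrivial real-valued irreducible character of $p'$-degree; this forces $\eta$, hence $\theta$, to be principal. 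Your argument replaces both citations with an elementary induction: reducing to an elementary abelian minimal normal $r$-subgroup $M$, Clifford theory hands you a $P$-invariant linear $\lam$, the element $y=gh\in\norm GP$ of odd order inverting $\lam$ kills the case $2\nmid r$, and the coprime-action fact that $\cent MP=1$ forces $\Irr(M)^P=\{1_M\}$ kills the case $r=2$. What your approach buys is self-containedness (no appeal to \cite{NS} or \cite{IMN1}); what the paper's buys is brevity and the absence of any case division on $r$. One small point: for non-cyclic $P$, Brauer's permutation lemma by itself only equates fixed points of individual elements, so in the $r=2$ case you should either quote the coprime-action version of that statement or argue directly that $M=[M,P]\times\cent MP=[M,P]$ and hence every $P$-invariant $\lam$ is trivial on $M=[M,P]$; this is a one-line fix, not a gap.
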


\begin{proof}
Let $\chi \in \irrp G$. Since $\chi$ has $p'$-degree, it follows that
$\chi_N$ has a $P$-invariant constituent $\theta$. By the Frattini argument,
two such $P$-invariant constituents are $\norm GP$-conjugate.
Now, since $\chi$ is real-valued, it follows that $\bar\theta$ also
lies under $\chi$. Hence, there exists $g \in \norm GP$ such that
$\theta^g=\bar\theta$. Then $g^2$ fixes $\theta$. Since $\norm GP$ has odd order,
it follows that $g$ fixes $\theta$, and therefore, we have that $\theta$ is
a real-valued $P$-invariant character. Now, by Lemma 2.1 of \cite{NS},
we have that $\theta$ has a real extension $\eta$ to $NP$. 
Now, $NP$ is a solvable group with a Sylow $p$-subgroup $P$
and having odd Sylow normalizer. By Theorem A of \cite{IMN1},
we have that $\eta$ is principal, and therefore $\theta$ is principal.
\end{proof}

If  $G$
  has no non-trivial real conjugacy classes of $p'$-size, then it is not true that
 this condition is inherited by normal subgroups.
 Take $G$ the semidirect product of the extra-special 3-group $F$
 of exponent 3 acted on by an involution that inverts $F/\zent F$
 and centralizes $\zent F$. Set $p=3$. Then $G/Z$ possesses a
 real class of size 3. 
 
 \medskip

 \begin{thm}
 Suppose that $G$ is a finite group,
 let $p$ be an odd prime, and let $P \in \syl pG$.
 If
 $|\norm GP|$ is odd, then  for every $N \nor G$ the only
 real class of $G/N$ with $p'$-size is the trivial class.
 \end{thm}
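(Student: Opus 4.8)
The plan is to reduce immediately to the case $N = 1$ and then finish with a short Sylow-theoretic and odd-order argument.

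For the reduction I would first record the standard fact that, for $P \in \Syl_p(G)$ and $N \nor G$, one has $\bfN_{G/N}(PN/N) = \bfN_G(P)N/N$. Indeed, if $\bar{g} \in G/N$ normalizes $PN/N$, then $P$ and $P^g$ are both Sylow $p$-subgroups of $PN$, so $P^{gm} = P$ for some $m \in PN$; thus $gm \in \bfN_G(P)$ and $g \in \bfN_G(P)N$, the reverse inclusion being obvious. Consequently $|\bfN_{G/N}(PN/N)|$ divides $|\bfN_G(P)|$ and so is odd, i.e. the hypothesis is inherited by every quotient of $G$. It therefore suffices to prove that, when $|\bfN_G(P)|$ is odd, the only real conjugacy class of $G$ of $p'$-size is the trivial one.

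So let $g \in G$ lie in a real class of $p'$-size; I claim $g = 1$. Since the class size $|G:\bfC_G(g)|$ is prime to $p$, the centralizer $\bfC_G(g)$ contains a Sylow $p$-subgroup of $G$, and after replacing $g$ by a suitable $G$-conjugate (which changes neither the class nor its being real) I may assume $P \le \bfC_G(g)$, so $g \in \bfC_G(P) \le \bfN_G(P)$; in particular $g$ has odd order. Now suppose $g^x = g^{-1}$ for some $x \in G$. From $\bfC_G(g)^x = \bfC_G(g^x) = \bfC_G(g^{-1}) = \bfC_G(g)$ we see that $P$ and $P^x$ are both Sylow $p$-subgroups of $\bfC_G(g)$, hence $P^{xc} = P$ for some $c \in \bfC_G(g)$. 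Put $y := xc \in \bfN_G(P)$; then $g^y = (g^x)^c = (g^{-1})^c = g^{-1}$. Writing $k$ for the order of $y$, which is odd, we obtain $g = g^{y^k} = g^{(-1)^k} = g^{-1}$, so $g^2 = 1$; as $g$ has odd order, $g = 1$, as desired.

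I do not anticipate a real obstacle: the statement is elementary, as the surrounding discussion indicates. The only steps needing a little care are the Frattini-type identification of $\bfN_{G/N}(PN/N)$ and the observation that an element $x$ inverting $g$ normalizes $\bfC_G(g)$, which is precisely what allows $x$ to be adjusted (via Sylow's theorem inside $\bfC_G(g)$) into an element of $\bfN_G(P)$ that still inverts $g$; everything else is immediate.
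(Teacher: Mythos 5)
Your proposal is correct and follows essentially the same route as the paper: the same Frattini-type identity $\bfN_{G/N}(PN/N)=\bfN_G(P)N/N$ for the reduction to $N=1$, and the same adjustment of an inverting element into $\bfN_G(P)$ via Sylow's theorem inside $\bfC_G(g)$, followed by the odd-order argument forcing $g^2=1$ and then $g=1$. No gaps.
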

 
 \medskip
 
 \begin{proof}~~Suppose that $|\norm GP|$ is odd.
 If $N \nor G$, then $\norm{G/N}{PN/N}=\norm GP N/N \cong \norm GP/\norm NP$
 also has odd order. So assume that $x^G$ is a real class of $p'$-size.
 Then we may assume that there is
 some $t \in G$ such that $x^t=x^{-1}$ and $P \leq \cent Gx$.
 Then $P^t \leq \cent Gx$ and $P^t=P^v$ for some $v \in \cent Gx$.
 Then $n=tv^{-1} \in \norm GP$ and $x^n=x^{-1}$.
 Thus $x^{n^2}=x$. Since $\langle n \rangle=\langle n^2\rangle$,
 we conclude that $x=x^{-1}$ and so $x^2=1$. Since $P \leq \cent Gx$
 we have that $x \in \cent GP \leq \norm GP$. Thus $x=1$, because $\norm GP$
 has odd order.
 \end{proof}
 
 If $\norm GP$ has odd order, there is a deeper necessary condition that can be read off
 the character table (assuming the Galois version of the McKay conjecture \cite{N}): {\it
 If $\sigma$ is the Galois automorphism that complex-conjugates $p$-power roots
 of unity and fixes $p'$-roots of unity, then
 every $\sigma$-fixed $p'$-degree irreducible character of $G$ is $p$-rational}.

\end{document}